\renewcommand{\uppercasenonmath}[1]{}
\numberwithin{equation}{section} \theoremstyle{plain}
\newtheorem*{thm*}{Main Theorem}
\newtheorem{thm}{Theorem}[section]
\newtheorem{cor}[thm]{Corollary}
\newtheorem*{cor*}{Corollary}
\newtheorem{lem}[thm]{Lemma}
\newtheorem*{lem*}{Lemma}
\newtheorem*{fact*}{Fact}
\newtheorem*{nota*}{Notation}
\newtheorem*{prop*}{Proposition}
\newtheorem{rem}[thm]{Remark}
\newtheorem*{rem*}{Remark}
\newtheorem*{observation*}{Observation}
\newtheorem*{exa*}{Example}
\newtheorem{df}[thm]{Definition}
\newtheorem*{df*}{Definition}
\newtheorem*{conj*}{Conjecture}
\renewcommand{\geq}{\geqslant}
\renewcommand{\leq}{\leqslant}
\begin{document}
\begin{center}
{\large  \bf The local-global principle for the artinianness dimensions}

\vspace{0.3cm} Jingwen Shen, Xiaoyan Yang \\
Zhejiang University of Science and Technology, Hangzhou 310023, China\\
E-mails: shenjw0609@163.com,  yangxiaoyanzj@outlook.com
\end{center}
\bigskip
\centerline { \bf  Abstract} Let $R$ be a commutative noetherian ring and $\mathfrak{a}$ an ideal of $R$. The goal of this paper is to establish the local-global principle for the artinianness dimension $r_{\mathfrak{a}}(M)$, where $r_{\mathfrak{a}}(M)$ is the smallest integer such that the local homology module of $M$ is not artinian. For an artinian $R$-module $M$ with the set $\mathrm{Coass}_{R}\mathrm{H}_{r_{\mathfrak{a}}(M)}^{\mathfrak{a}}(M)$ finite, we show that $r_{\mathfrak{a}}(M)=\mathrm{inf}\{r_{\mathfrak{a}R_{\mathfrak{p}}}(\mathrm{Hom}_{R}(R_{\mathfrak{p}},M)) \hspace{0.03cm}|\hspace{0.03cm}\mathfrak{p}\in \mathrm{Spec}R\}$. And the class of all modules $N$ such that $\mathrm{Coass}_{R}N$ is finite is studied.
\leftskip10truemm \rightskip10truemm \noindent \\
\vbox to 0.3cm{}\\
{\it Key Words:} artinianness dimension; local homology\\
{\it 2020 Mathematics Subject Classification:} 13C15; 13J10

\leftskip0truemm \rightskip0truemm
\bigskip

\section{\bf Introduction and Preliminaries}
\renewcommand{\thethm}{\Alph{thm}}
Throughout this paper, let $R$ be a commutative noetherian ring, $\mathfrak{a}$ an ideal of $R$. Denote $\mathrm{Spec}R$ the set of primes ideals of $R$,  $\mathrm{V}(\mathfrak{a})=\{\mathfrak{p}\in \mathrm{Spec}R\hspace{0.03cm}|\hspace{0.03cm}\mathfrak{a}\subseteq \mathfrak{p}\}$. Fix $\mathfrak{p}\in \mathrm{Spec}R$, $M_{\mathfrak{p}}$ denote the localization of $R$-module $M$ at $\mathfrak{p}$, the colocalization $\mathrm{Hom}_{R}(R_{\mathfrak{p}},M)$ of $M$ at $\mathfrak{p}$ briefly represented by $_{\mathfrak{p}}M$.

For an $R$-module $M$, the $i$th \emph{local cohomology module} of $M$ with respect to $\mathfrak{a}$ is defined as
\begin{center}$\begin{aligned}
\mathrm{H}^{i}_{\mathfrak{a}}(M)=\underrightarrow{\text{lim}}\mathrm{Ext}^{i}_{R}(R/\mathfrak{a}^{t},M).
\end{aligned}$\end{center}
If $(R,\mathfrak{m})$ is a local ring and $N$ a non-zero finitely generated $R$-module of dimension $d>0$, then $\mathrm{H}^{0}_{\mathfrak{m}}(N)$ is finitely generated, while $\mathrm{H}^{d}_{\mathfrak{m}}(N)$ is not finitely generated and $\mathrm{H}^{i}_{\mathfrak{m}}(N)=0$ for $i> d$ by \cite[Theorem 6.1.2, Corollary 7.3.3]{B1}. It becomes of interest to identify the least integer $i$ such that $\mathrm{H}^{i}_{\mathfrak{m}}(N)$ is not finitely generated. This integer is called the finiteness dimension of $N$ with respect to $\mathfrak{m}$. More generally, the finiteness dimension of $N$ relative to $\mathfrak{a}$ is defined as
\begin{center}
$f_{\mathfrak{a}}(N):= \mathrm{inf}\{i\geq 0 \hspace{0.03cm}|\hspace{0.03cm}
\mathrm{H}_{\mathfrak{a}}^{i}(N)~\mathrm{is~not~finitely~generated}\}$.
\end{center}
Faltings \cite[Satz 1]{F0} proved that for a positive integer $s$, the $R_{\mathfrak{p}}$-module $\mathrm{H}_{\mathfrak{a}R_{\mathfrak{p}}}^{i}(N_{\mathfrak{p}})$ is finitely generated for $i< s$ and all $\mathfrak{p}\in \mathrm{Spec}R$ if and only if the $R$-module $\mathrm{H}_{\mathfrak{a}}^{i}(N)$ is finitely generated for $i< s$. An immediate consequence of the Faltings' result is
\begin{center}$\begin{aligned}
f_{\mathfrak{a}}(N)
&= \mathrm{inf}\{f_{\mathfrak{a}R_{\mathfrak{p}}}(N_{\mathfrak{p}}) \hspace{0.03cm}|\hspace{0.03cm}
\mathfrak{p}\in \mathrm{Spec}R\},
\end{aligned}$\end{center}
which is called local-global principle for finiteness dimension.

Local homology as a duality of local cohomoloy was initiated by Matlis \cite{M4} in 1974. Denote by $\Lambda_{\mathfrak{a}}(M)=\underleftarrow{\text{lim}}M/\mathfrak{a}^{t}M$ the $\mathfrak{a}$-adic completion of $M$ and recall the $i$th \emph{local homology module} of $M$ defined in \cite{CN0} is
\begin{center}$\begin{aligned}
\mathrm{H}_{i}^{\mathfrak{a}}(M)=\underleftarrow{\text{lim}}\mathrm{Tor}_{i}^{R}(R/\mathfrak{a}^{t},M).
\end{aligned}$\end{center}
Cuong and Nam \cite{CN1} proved that the local homology defined in this way behaves similar properties to local cohomology in the category of linearly compact $R$-modules, specially, in the category of artinian $R$-modules. For example, $\mathrm{H}_{0}^{\mathfrak{a}}(M)$ is artinian when $M$ is an artinian $R$-module and $\mathrm{H}_{i}^{\mathfrak{a}}(M)=0$ for $i> \mathrm{mag}_{R}M$, where $\mathrm{mag}_{R}M$ is the magnitude of $M$ defined by Yassemi \cite{Y2}.

Motivated by the finiteness dimension, the artinianness dimension of $M$ with respect to $\mathfrak{a}$ is defined as
\begin{center}$\begin{aligned}
r_{\mathfrak{a}}(M):
&= \mathrm{inf}\{i\geq 0 \hspace{0.03cm}|\hspace{0.03cm}
\mathrm{H}^{\mathfrak{a}}_{i}(M)~\mathrm{is~not~artinian}\}.
\end{aligned}$\end{center}
The aim of this article is to build the local-global principle for the artinianness dimension.
More precisely, we prove the following theorem (see Theorem \ref{thm:3.6}).

\begin{thm}\label{thm:1.1}
Let $M$ be an artinian $R$-module such that the set $\mathrm{Coass}_{R}\mathrm{H}_{r_{\mathfrak{a}}(M)}^{\mathfrak{a}}(M)$ is finite. One has an equality
\begin{center}
$r_{\mathfrak{a}}(M)=
\mathrm{inf}\{r_{\mathfrak{a}R_{\mathfrak{p}}}(\mathrm{Hom}_{R}(R_{\mathfrak{p}},M)) \hspace{0.03cm}|\hspace{0.03cm}\mathfrak{p}\in \mathrm{Spec}R\}.$
\end{center}
\end{thm}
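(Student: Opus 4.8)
The plan is to reduce Theorem \ref{thm:1.1} to two facts concerning the interplay between colocalization and local homology, together with one characterization of artinianness. First I would record (or invoke from the preliminaries) the natural isomorphism
$\mathrm{Hom}_{R}(R_{\mathfrak{p}},\mathrm{H}_{i}^{\mathfrak{a}}(M))\cong\mathrm{H}_{i}^{\mathfrak{a}R_{\mathfrak{p}}}(\mathrm{Hom}_{R}(R_{\mathfrak{p}},M))$,
valid for every artinian $M$ and every $\mathfrak{p}\in\mathrm{Spec}R$: this holds because $\mathrm{Ext}^{\geq 1}_{R}(R_{\mathfrak{p}},-)$ vanishes on artinian modules, so that $\mathrm{Hom}_{R}(R_{\mathfrak{p}},-)$ is exact on them and may be commuted past the functor $\varprojlim_{t}\mathrm{Tor}_{i}^{R}(R/\mathfrak{a}^{t},-)$ computing local homology, while finiteness of presentation of $R/\mathfrak{a}^{t}$ turns the relevant $\mathrm{Tor}$'s over $R$ into $\mathrm{Tor}$'s over $R_{\mathfrak{p}}$. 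The key technical ingredient, proved separately, is: \emph{if $N$ is an $R$-module with $\mathrm{Coass}_{R}N$ finite and $\mathrm{Hom}_{R}(R_{\mathfrak{p}},N)$ is an artinian $R_{\mathfrak{p}}$-module for every $\mathfrak{p}\in\mathrm{Spec}R$, then $N$ is artinian.}

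Granting these, the inequality $r_{\mathfrak{a}}(M)\leq\mathrm{inf}\{r_{\mathfrak{a}R_{\mathfrak{p}}}(\mathrm{Hom}_{R}(R_{\mathfrak{p}},M))\mid\mathfrak{p}\in\mathrm{Spec}R\}$ is the routine half. Fix $\mathfrak{p}$ and $i<r_{\mathfrak{a}}(M)$; then $\mathrm{H}_{i}^{\mathfrak{a}}(M)$ is artinian and, by the displayed isomorphism, $\mathrm{H}_{i}^{\mathfrak{a}R_{\mathfrak{p}}}(\mathrm{Hom}_{R}(R_{\mathfrak{p}},M))\cong\mathrm{Hom}_{R}(R_{\mathfrak{p}},\mathrm{H}_{i}^{\mathfrak{a}}(M))$. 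This last module is $0$ when $\mathfrak{p}\notin\mathrm{V}(\mathfrak{a})$, and when $\mathfrak{p}\in\mathrm{V}(\mathfrak{a})$ it becomes, after passing to $R_{\mathfrak{p}}$ and its $\mathfrak{a}$-adic completion (where the $\mathfrak{a}$-adic completeness of local homology makes Matlis duality applicable), the Matlis dual of a localization of a finitely generated module over a complete local ring, hence artinian over $R_{\mathfrak{p}}$. So $r_{\mathfrak{a}R_{\mathfrak{p}}}(\mathrm{Hom}_{R}(R_{\mathfrak{p}},M))>i$ for all $i<r_{\mathfrak{a}}(M)$ and all $\mathfrak{p}$, which is the asserted inequality.

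For the reverse inequality put $t=r_{\mathfrak{a}}(M)$. By definition $\mathrm{H}_{t}^{\mathfrak{a}}(M)$ is not artinian, and by hypothesis $\mathrm{Coass}_{R}\mathrm{H}_{t}^{\mathfrak{a}}(M)$ is finite; so the technical ingredient, in contrapositive form, yields a prime $\mathfrak{p}$ for which $\mathrm{Hom}_{R}(R_{\mathfrak{p}},\mathrm{H}_{t}^{\mathfrak{a}}(M))$ is not artinian over $R_{\mathfrak{p}}$, and the displayed isomorphism reinterprets this as saying $\mathrm{H}_{t}^{\mathfrak{a}R_{\mathfrak{p}}}(\mathrm{Hom}_{R}(R_{\mathfrak{p}},M))$ is not artinian, i.e. $r_{\mathfrak{a}R_{\mathfrak{p}}}(\mathrm{Hom}_{R}(R_{\mathfrak{p}},M))\leq t$. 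Hence the infimum is at most $t=r_{\mathfrak{a}}(M)$, and combining the two inequalities gives the equality.

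The main obstacle is the technical ingredient itself, and I expect its proof to be the dualization of the delicate inductive step behind Faltings' theorem. One reduces to $N=\mathrm{H}_{t}^{\mathfrak{a}}(M)$ and inducts on $t$; choosing $x\in\mathfrak{a}$ that is, in the dual sense, a nonzerodivisor, the short exact sequence $0\to(0:_{N}x)\to N\xrightarrow{\,x\,}N\to 0$ and the long exact sequence of local homology import artinianness from the lower (already handled) local homology modules into $N/xN$ and into $(0:_{N}x)$. The delicate point is then passing from ``$N/xN$ artinian and all colocalizations of $N$ artinian'' to ``$N$ artinian'': the naive statement — that an $\mathfrak{a}$-adically complete $N$ with $N/xN$ artinian is itself artinian — already fails (for instance $\mathbb{Z}_{p}$ as a $\mathbb{Z}$-module, which even arises as $\mathrm{H}_{1}^{(p)}(\mathbb{Z}/p^{\infty})$), so it is essential to exploit the finiteness of $\mathrm{Coass}_{R}N$, working one coassociated prime at a time and invoking a Melkersson-type length criterion for artinianness after colocalizing. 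Carrying this out — together with the companion duality identifying $\mathrm{Hom}_{R}(R_{\mathfrak{p}},N)^{\vee}$ with a localization of $N^{\vee}$ over the relevant complete ring, and the structural analysis of the class of modules with finite $\mathrm{Coass}$ announced in the abstract — is where the real work lies.
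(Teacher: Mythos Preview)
Your overall architecture is sound: reduce to the colocalization isomorphism ${}_{\mathfrak{p}}\mathrm{H}_{i}^{\mathfrak{a}}(M)\cong\mathrm{H}_{i}^{\mathfrak{a}R_{\mathfrak{p}}}({}_{\mathfrak{p}}M)$ and prove the two inequalities separately. The easy direction is fine, though your Matlis-duality detour is unnecessary; colocalization of an artinian $R$-module is artinian over $R_{\mathfrak{p}}$ directly.

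The gap is in the hard direction. You have correctly isolated the crux as your ``technical ingredient'' (finite $\mathrm{Coass}$ plus all colocalizations artinian implies artinian), but you do not prove it, and your sketched attack --- induction on $t$, a co-regular $x\in\mathfrak{a}$, the long exact sequence of local homology --- does not hang together: the ingredient is a statement about a single module $N$, not a family indexed by $t$, and the $\mathbb{Z}_{p}$ obstruction you yourself raise is not dispatched by anything in the sketch. More seriously, the ingredient in the bare generality you state it (arbitrary $N$ with finite $\mathrm{Coass}$) discards exactly the structure that makes the argument work, namely that $N=\mathrm{H}_{t}^{\mathfrak{a}}(M)$ sits atop a tower of already-artinian local homology modules.

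The paper's route avoids your ingredient entirely by first proving an annihilator criterion (Lemma~\ref{lem:3.1}): for linearly compact $M$ with $\mathrm{Tor}_{i}^{R}(R/\mathfrak{a},M)$ artinian, one has $\mathrm{H}_{i}^{\mathfrak{a}}(M)$ artinian for all $i<t$ if and only if $\mathfrak{a}\subseteq\sqrt{\mathrm{Ann}_{R}\mathrm{H}_{i}^{\mathfrak{a}}(M)}$ for all $i<t$. This is proved via the spectral sequence $\mathrm{Tor}_{p}^{R}(R/\mathfrak{a},\mathrm{H}_{q}^{\mathfrak{a}}(M))\Rightarrow\mathrm{Tor}_{p+q}^{R}(R/\mathfrak{a},M)$. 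With this reformulation in hand, the hard direction becomes elementary: if $s<r_{\mathfrak{a}R_{\mathfrak{p}}}({}_{\mathfrak{p}}M)$ for every $\mathfrak{p}$, then for each of the finitely many coassociated primes $\mathfrak{p}_{1},\dots,\mathfrak{p}_{k}$ of $\mathrm{H}_{s}^{\mathfrak{a}}(M)$ there is an exponent $l_{i}$ with $(\mathfrak{a}R_{\mathfrak{p}_{i}})^{l_{i}}\,{}_{\mathfrak{p}_{i}}\mathrm{H}_{s}^{\mathfrak{a}}(M)=0$; taking $l=\max_{i}l_{i}$, the module $\mathfrak{a}^{l}\mathrm{H}_{s}^{\mathfrak{a}}(M)$ has vanishing colocalization at every coassociated prime, hence is zero, so $\mathfrak{a}\subseteq\sqrt{\mathrm{Ann}_{R}\mathrm{H}_{s}^{\mathfrak{a}}(M)}$ and the annihilator criterion gives artinianness. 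The finiteness of $\mathrm{Coass}$ is used only to pass to a common exponent --- no Melkersson-type criterion or delicate induction is needed.
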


In section 3, we study a class $FM_{\leq n}$ of $R$-modules, show that  \begin{center}$(\mathrm{Coass}_{R}\mathrm{H}_{\mathrm{g}^{\mathfrak{a}}_{n}(M)}^{\mathfrak{a}}(M))_{> n}= \mathrm{inf}\{\mathfrak{p}\in \mathrm{Coass}_{R}\mathrm{H}_{\mathrm{g}^{\mathfrak{a}}_{n}(M)}^{\mathfrak{a}}(M) \hspace{0.03cm}|\hspace{0.03cm}
\mathrm{dim}R/\mathfrak{p}> n\}$\end{center} is finite, where $\mathrm{g}^{\mathfrak{a}}_{n}(M):=\mathrm{inf}\{i\geq0\hspace{0.03cm}|\hspace{0.03cm}
\mathrm{H}^{\mathfrak{a}}_{i}(M)\notin FM_{\leq n}\}$. Moreover, we find that all semi-discrete linearly compact $\mathfrak{a}$-coartinian $R$-modules satisfy the above equality.

\vspace{2mm}
Now we list some notions which will need later.

{\bf Coassociated prime and magnitude.} The \emph{support} of an $R$-module $M$, denoted by $\mathrm{Supp}_{R}M$, is the set of prime ideals of $\mathfrak{p}$ such that there is a cyclic submodule $N$ of $M$ with $\mathrm{Ann}_{R}N\subseteq \mathfrak{p}$.
The \emph{(Krull) dimension} of $M$ is
\begin{center}
$\mathrm{dim}_{R}M=\mathrm{sup}\{\mathrm{dim}R/\mathfrak{p}\hspace{0.03cm}|\hspace{0.03cm}\mathfrak{p}\in \mathrm{Supp}_{R}M\}$.
\end{center}
If $M=0$, then write $\mathrm{dim}_{R}M=-\infty$.

Yassemi \cite{Y1} defined the cocyclic modules. An $R$-module $L$ is \emph{cocyclic} if $L$ is a submodule of $E(R/\mathfrak{m})$ for some maximal ideal $\mathfrak{m}$. The \emph{coassociated prime} of $M$, denoted by $\mathrm{Coass}_{R}M$, is the set of prime ideals $\mathfrak{p}$ such that there is a cocyclic homomorphic image $L$ of $M$ with $\mathfrak{p}=\mathrm{Ann}_{R}L$. If the equal condition is reduced to include, then the set of primes ideals is called \emph{cosupport} of $M$ and denote by $\mathrm{Cosupp}_{R}M$.
Yassemi \cite{Y2} then defined \emph{magnitude} of modules, a dual concept of dimension, as
\begin{center}$\begin{aligned}
\mathrm{mag}_{R}M=\mathrm{sup}\{\mathrm{dim}R/\mathfrak{p}\hspace{0.03cm}|\hspace{0.03cm}\mathfrak{p}\in \mathrm{Cosupp}_{R}M\}.
\end{aligned}$\end{center}
Write $\mathrm{mag}_{R}M=-\infty$ if $M=0$.

Following \cite{M}, a topological $R$-module $M$ is said to be \textit{linearly topologized} if it has a base of neighborhoods of the zero element $\mathcal{M}$ consisting of submodules; $M$ is called \textit{Hausdorff} if the intersection of all the neighborhoods of the zero element is $0$. A Hausdorff linearly topologized $R$-module $M$ is said to be \textit{linearly compact} if $\mathcal{F}$ is a family of closed cosets (i.e., cosets of closed submodules) in $M$ which has the finite intersection property, then the cosets in $\mathcal{F}$ have a non-empty intersection. It should be noted that an artinian $R$-module is linearly compact. A Hausdorff linearly topologized $R$-module $M$ is called \emph{semi-discrete} if every submodule of $M$ is closed. The class of semi-discrete linearly compact modules is very large, it contains many important classes of modules such as the class of artinain modules, the class of finitely generated modules over a complete ring.

\bigskip
\section{\bf Local-global principle for the artinianness }
\renewcommand{\thethm}{\arabic{section}.\arabic{thm}}

Denote $\mathcal{S}$ a \emph{Serre subcategory} of the category of $R$-modules which means that it is closed under taking submodules, quotients and extensions. The classes of finitely generated, artinian $R$-modules are examples of Serre subcategories. In this section, the proof of Theorem \ref{thm:1.1} is provided. We begin with the following lemmas.

\begin{lem}\label{lem:2.7}
Let $s$ be a non-negative integer and $M$ a linearly compact $R$-module.

$\mathrm{(1)}$ If $\mathrm{Tor}_{s}^{R}(R/\mathfrak{a},M)\in \mathcal{S}$ and
$\mathrm{Tor}_{j}^{R}(R/\mathfrak{a},\mathrm{H}_{i}^{\mathfrak{a}}(M))\in \mathcal{S}$ for $i< s$ and $j\geq 0$, then $R/\mathfrak{a}\otimes_{R}\mathrm{H}_{s}^{\mathfrak{a}}(M)\in \mathcal{S}$.

$\mathrm{(2)}$ If $\mathrm{Tor}_{s+1}^{R}(R/\mathfrak{a},M)\in \mathcal{S}$ and $\mathrm{Tor}_{j}^{R}(R/\mathfrak{a},\mathrm{H}_{i}^{\mathfrak{a}}(M))\in \mathcal{S}$ for $i< s$ and $j\geq 0$, then $\mathrm{Tor}_{1}^{R}(R/\mathfrak{a},\mathrm{H}_{s}^{\mathfrak{a}}(M))\in \mathcal{S}$.
\end{lem}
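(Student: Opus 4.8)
The plan is to mimic the standard local cohomology argument (à la Faltings / Brodmann–Sharp) for the finiteness dimension, but run it through local homology with the inverse limit of the Koszul-type complexes $R/\fa^t$. The key input is that $\HH_i^{\fa}(M)=\varprojlim \tor_i^R(R/\fa^t,M)$ and that for a linearly compact $M$ the Mittag-Leffler condition holds, so the spectral sequence $E^2_{p,q}=\tor_p^R(R/\fa,\HH_q^{\fa}(M))\Rightarrow \tor_{p+q}^R(R/\fa,M)$ (or the analogous convergent one obtained by tensoring a free resolution of $R/\fa$ with the tower $\{R/\fa^t\otimes_R M\}$ and passing to the limit) is available in the linearly compact setting — this is where the hypothesis ``$M$ linearly compact'' is used, and it is the one genuinely delicate point, since exactness of $\varprojlim$ on such towers and commutation of $\varprojlim$ with $\otimes R/\fa$ (equivalently, the vanishing of $\varprojlim^1$) must be invoked. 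I would quote the relevant results of Cuong–Nam on local homology in the category of linearly compact modules for this.

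For part (1): I would consider the fourth-quadrant (homological) spectral sequence $E^2_{p,q}=\tor_p^R(R/\fa,\HH_q^{\fa}(M))\Rightarrow \tor_{p+q}^R(R/\fa,M)$. Look at the entries on the total-degree-$s$ antidiagonal. The abutment $\tor_s^R(R/\fa,M)$ lies in $\mathcal S$ by hypothesis. The only term on this antidiagonal with $q=s$ is $E^2_{0,s}=R/\fa\otimes_R \HH_s^{\fa}(M)$; all other terms on the antidiagonal have $q<s$, hence are of the form $\tor_p^R(R/\fa,\HH_q^{\fa}(M))$ with $q<s$ and so lie in $\mathcal S$ by hypothesis. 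The differentials into and out of the $E^r_{0,s}$ spot have source/target among terms $E^r_{p,q}$ with $q<s$ (the outgoing differential $d^r\colon E^r_{0,s}\to E^r_{-r,s+r-1}$ is automatically zero because of the fourth-quadrant vanishing), so at each page the subquotient $E^{r+1}_{0,s}$ of $E^r_{0,s}$ is the kernel of a map into a module in $\mathcal S$; since $\mathcal S$ is closed under submodules, kernels, quotients, and extensions, an easy induction on $r$ shows $E^{\infty}_{0,s}\in \mathcal S$ iff $E^2_{0,s}\in\mathcal S$ — more precisely, $E^2_{0,s}$ differs from $E^{\infty}_{0,s}$ by finitely many extensions by subquotients of $\mathcal S$-modules. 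Then the finite filtration of $\tor_s^R(R/\fa,M)$ has $E^{\infty}_{0,s}$ as one of its graded pieces and the rest of the graded pieces lie in $\mathcal S$; since $\tor_s^R(R/\fa,M)\in\mathcal S$ and $\mathcal S$ is Serre, $E^{\infty}_{0,s}\in\mathcal S$, whence $R/\fa\otimes_R\HH_s^{\fa}(M)=E^2_{0,s}\in\mathcal S$.

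For part (2): I would run the identical filtration argument one antidiagonal higher, in total degree $s+1$. Now the abutment $\tor_{s+1}^R(R/\fa,M)\in\mathcal S$ by hypothesis, and on the degree-$(s+1)$ antidiagonal the terms with $q<s$ again lie in $\mathcal S$ by hypothesis; the term with $q=s$ is $E^2_{1,s}=\tor_1^R(R/\fa,\HH_s^{\fa}(M))$, and the term with $q=s+1$ is $E^2_{0,s+1}=R/\fa\otimes_R\HH_{s+1}^{\fa}(M)$. The subtlety here is that $E^r_{1,s}$ can receive the differential $d^r\colon E^r_{1+r,s-r+1}\to E^r_{1,s}$ and can map via $d^r\colon E^r_{1,s}\to E^r_{1-r,s+r-1}$ (zero for $r\ge 2$ by fourth-quadrant vanishing, and for $r=1$ we are already at $E^2$), and it can also be affected through $E^r_{0,s+1}$ entering on a later page via $d^r\colon E^r_{0,s+1}\to E^r_{-r,s+r}$ (zero) — so in fact the only incoming differentials to the $E^r_{1,s}$ spot have source $E^r_{1+r,s+1-r}$ with second index $s+1-r<s$, hence in $\mathcal S$. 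Thus again $E^{\infty}_{1,s}$ differs from $E^2_{1,s}=\tor_1^R(R/\fa,\HH_s^{\fa}(M))$ by extensions/subquotients inside $\mathcal S$, and $E^{\infty}_{1,s}$ is a graded piece of the Serre module $\tor_{s+1}^R(R/\fa,M)$, with all other graded pieces in $\mathcal S$. Hence $E^{\infty}_{1,s}\in\mathcal S$, and therefore $\tor_1^R(R/\fa,\HH_s^{\fa}(M))\in\mathcal S$. The main obstacle, as noted, is not the spectral-sequence bookkeeping (which is routine once set up) but rigorously producing and establishing convergence of the spectral sequence relating $\tor_\bullet^R(R/\fa,-)$ of $M$ to that of its local homology modules in the linearly compact category; after that, the Serre-closure properties do all the remaining work.
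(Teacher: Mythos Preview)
Your proposal is correct and follows essentially the same route as the paper: the paper invokes the very spectral sequence $E^2_{p,q}=\tor_p^R(R/\fa,\HH_q^{\fa}(M))\Rightarrow \tor_{p+q}^R(R/\fa,M)$, reads off $E^\infty_{0,s}\in\mathcal S$ from the filtration of the abutment, and then climbs back from $E^\infty_{0,s}$ to $E^2_{0,s}$ via the short exact sequences $0\to\mathrm{im}\,d^r_{r,s-r+1}\to E^r_{0,s}\to E^{r+1}_{0,s}\to 0$, exactly as you outline; part~(2) is dismissed there as ``a similar argument,'' which your analysis at $E^r_{1,s}$ makes explicit. One small remark on your write-up of~(2): you do not need the clause ``with all other graded pieces in~$\mathcal S$'' (indeed $E^\infty_{0,s+1}$ is not covered by the hypotheses on $q<s$) --- it suffices that $E^\infty_{1,s}$ is a subquotient of $\tor_{s+1}^R(R/\fa,M)\in\mathcal S$, and Serre closure does the rest.
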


\begin{proof}
We just prove $\mathrm{(1)}$ since $\mathrm{(2)}$ follows by a similar argument. 

Consider the spectral sequence
\begin{center}
$\xymatrix@C=10pt@R=5pt{
E^2_{p,q}=\mathrm{Tor}_{p}^R(R/\mathfrak{a},\mathrm{H}_{q}^\mathfrak{a}(M))\ar@{=>}[r]_{\ \ \ \ \ p}& \mathrm{Tor}_{p+q}^R(R/\mathfrak{a},M).}$
\end{center}
By assumption, $E^{2}_{p,q}\in \mathcal{S}$ for $p\geq 0$ and $q\leq s-1$. There is a finite filtration
\begin{center}
$0=U^{-1}\subseteq U^{0}\subseteq \cdots\subseteq U^{s-1}\subseteq U^{s}=\mathrm{Tor}_{s}^{R}(R/\mathfrak{a},M)$,
\end{center}
such that $U^{p}/U^{p-1}\cong E^{\infty}_{p,s-p}$ for every $0\leq p\leq s$. As $\mathrm{Tor}_{s}^{R}(R/\mathfrak{a},M)\in \mathcal{S}$, it follows that $E^{\infty}_{0,s}\cong U^{0}/U^{-1}\in \mathcal{S}$. Let $r\geq 2$, consider the differentials
$$\xymatrix{E^{r}_{r,s-r+1~} \ar[r]^-{d^{r}_{r,s-r+1}} & E^{r}_{0,s} \ar[r]^-{d^{r}_{0,s}} & E^{r}_{-r,s+r-1}=0.}$$
Since $s-r+1\leq s-1$ and $E^{r}_{r,s-r+1}$ is a subquotient of $E^{2}_{r,s-r+1}$, it follows that $E^{r}_{r,s-r+1}\in \mathcal{S}$, consequently $\mathrm{im}d^{r}_{r,s-r+1}\in \mathcal{S}$. Thus we obtain a short exact sequence
\begin{center}
$0\rightarrow \mathrm{im}d^{r}_{r,s-r+1}\rightarrow E^{r}_{0,s}\rightarrow E^{r+1}_{0,s}\rightarrow 0$.
\end{center}
Note that there is an integer $r_{0}\geq 2$ such that $E^{r+1}_{0,s}\cong  E^{\infty}_{0,s}\in \mathcal{S}$ for $r\geq r_{0}$. It follows that $E^{r_{0}+1}_{0,s}\in \mathcal{S}$. Hence the above exact sequence implies that $E^{r_{0}}_{0,s}\in \mathcal{S}$. Using the exact sequence inductively, we have $R/\mathfrak{a}\otimes_{R}\mathrm{H}_{s}^{\mathfrak{a}}(M)\cong E^{2}_{0,s}\in \mathcal{S}$.
\end{proof}

The next lemma provides a characterization of artinianness of local homology modules, which is a generalization of \cite[Proposition 4.7]{CN0}.

\begin{lem}\label{lem:3.1}
Let $M$ be a linearly compact $R$-module. Suppose that $t$ is a non-negative integer such that $\mathrm{Tor}_{i}^{R}(R/\mathfrak{a},M)$ is artinian for $i< t$. Then the following are equivalent:

$\mathrm{(1)}$ $\mathrm{H}_{i}^{\mathfrak{a}}(M)$ is artinian for $i< t$.

$\mathrm{(2)}$ $\mathfrak{a}\subseteq \sqrt{\mathrm{Ann}_{R}\mathrm{H}_{i}^{\mathfrak{a}}(M)}$ for $i< t$.
\end{lem}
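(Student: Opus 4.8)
The plan is to prove the two implications separately, with the implication $(1)\Rightarrow(2)$ being the easier one and $(2)\Rightarrow(1)$ requiring an induction on $t$ built on Lemma \ref{lem:2.7}. For $(1)\Rightarrow(2)$: if $\mathrm{H}_i^{\mathfrak{a}}(M)$ is artinian for $i<t$, then each such module is $\mathfrak{a}$-adically complete (since local homology is naturally a module over the completion $\Lambda_{\mathfrak{a}}(R)$, and an artinian module over a complete local setup is $\mathfrak{a}$-separated); more directly, one uses that $\mathrm{H}_i^{\mathfrak{a}}(M)=\Lambda_{\mathfrak{a}}(\mathrm{H}_i^{\mathfrak{a}}(M))$ for $M$ linearly compact, so $\bigcap_t \mathfrak{a}^t \mathrm{H}_i^{\mathfrak{a}}(M)=0$. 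An artinian module $N$ with $\bigcap_t \mathfrak{a}^tN=0$ must have $\mathfrak{a}^sN=\mathfrak{a}^{s+1}N$ for some $s$ by the descending chain condition, hence $\mathfrak{a}^sN=0$ by Nakayama-type reasoning on the (finitely many) maximal ideals in $\mathrm{Supp}$, so $\mathfrak{a}\subseteq\sqrt{\mathrm{Ann}_R N}$. This gives $(2)$.

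For $(2)\Rightarrow(1)$ I would induct on $t$; the case $t=0$ is vacuous. Assume the statement for $t-1$, so under hypothesis $(2)$ we already know $\mathrm{H}_i^{\mathfrak{a}}(M)$ is artinian for $i<t-1$. It remains to show $\mathrm{H}_{t-1}^{\mathfrak{a}}(M)$ is artinian. Set $N=\mathrm{H}_{t-1}^{\mathfrak{a}}(M)$. The inductive hypothesis tells us $\mathrm{H}_i^{\mathfrak{a}}(M)$ is artinian for $i<t-1$, hence $\mathrm{Tor}_j^R(R/\mathfrak{a},\mathrm{H}_i^{\mathfrak{a}}(M))$ is artinian for those $i$ and all $j\geq0$ (the functor $R/\mathfrak{a}\otimes_R-$ and its derived functors preserve the Serre category of artinian modules, computed via a free resolution of $R/\mathfrak{a}$ and the fact that finite direct sums and kernels/cokernels of artinians are artinian). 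Taking $\mathcal{S}$ to be the category of artinian $R$-modules in Lemma \ref{lem:2.7}(1) (with $s=t-1$), since $\mathrm{Tor}_{t-1}^R(R/\mathfrak{a},M)$ is artinian by assumption, we conclude $R/\mathfrak{a}\otimes_R N$ is artinian. Now invoke hypothesis $(2)$: $\mathfrak{a}\subseteq\sqrt{\mathrm{Ann}_R N}$ means $\mathfrak{a}^s N=0$ for some $s$, so $N$ is an $R/\mathfrak{a}^s$-module, and an ascending filtration $N\supseteq \mathfrak{a}N\supseteq\cdots\supseteq \mathfrak{a}^sN=0$ has successive quotients $\mathfrak{a}^kN/\mathfrak{a}^{k+1}N$ which are $R/\mathfrak{a}$-modules; to conclude $N$ artinian it suffices to see each quotient is artinian, and each is a quotient of $\mathfrak{a}^k\otimes_{R}(R/\mathfrak{a}\otimes_R N)$-type object. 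The cleanest route: $N$ being an $R/\mathfrak{a}^s$-module that is linearly compact (as a subquotient of the linearly compact $M$ via the local homology spectral machinery) with $R/\mathfrak{a}\otimes_R N$ artinian forces $N$ artinian, by the standard Melkersson-type criterion — a linearly compact module $N$ with $\mathfrak{a}^sN=0$ and $N/\mathfrak{a}N$ artinian is artinian.

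The main obstacle I anticipate is the last step: deducing artinianness of $N$ from artinianness of $R/\mathfrak{a}\otimes_R N=N/\mathfrak{a}N$ together with $\mathfrak{a}^sN=0$. One needs that $N$ is linearly compact (or at least lies in a category where Melkersson's criterion applies) — this should follow because $\mathrm{H}_i^{\mathfrak{a}}(M)$ is linearly compact whenever $M$ is, a fact from the Cuong--Nam theory cited in the introduction. Granting that, the filtration argument reduces everything to: a linearly compact module killed by $\mathfrak{a}$ with artinian "top" is artinian, which is where one applies duality/Matlis-type arguments or directly the structure of linearly compact modules over $R/\mathfrak{a}$. I would state this reduction as a short auxiliary observation (or cite \cite{CN0, CN1}) rather than reprove it. The rest is bookkeeping with spectral sequences already packaged in Lemma \ref{lem:2.7} and the elementary completeness-plus-DCC argument for $(1)\Rightarrow(2)$.
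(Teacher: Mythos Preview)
Your overall strategy matches the paper's: prove $(1)\Rightarrow(2)$ directly and $(2)\Rightarrow(1)$ by induction on $t$, invoking Lemma~\ref{lem:2.7} with $\mathcal{S}$ the artinian modules to obtain that $R/\mathfrak{a}\otimes_R \mathrm{H}_{t-1}^{\mathfrak{a}}(M)$ is artinian, and then combining with $\mathfrak{a}^s\mathrm{H}_{t-1}^{\mathfrak{a}}(M)=0$. Two points of comparison are worth noting.

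For $(1)\Rightarrow(2)$ the paper takes a shorter route: since $\mathrm{Cosupp}_R\mathrm{H}_i^{\mathfrak{a}}(M)\subseteq \mathrm{V}(\mathfrak{a})$ always, and for an artinian module $N$ one has $\mathrm{V}(\mathrm{Ann}_R N)=\mathrm{Cosupp}_R N$, the inclusion $\mathfrak{a}\subseteq\sqrt{\mathrm{Ann}_R\mathrm{H}_i^{\mathfrak{a}}(M)}$ is immediate. Your separation--plus--DCC argument is also valid (and in fact once DCC gives $\mathfrak{a}^sN=\mathfrak{a}^{s+1}N$, you get $\mathfrak{a}^sN=\bigcap_t\mathfrak{a}^tN=0$ directly; no Nakayama is needed).

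For the last step of $(2)\Rightarrow(1)$ you are making it harder than necessary. You do not need $N=\mathrm{H}_{t-1}^{\mathfrak{a}}(M)$ to be linearly compact, nor any Melkersson-type criterion. The filtration argument you already wrote down is enough and works for \emph{any} $R$-module $N$: if $\mathfrak{a}^sN=0$ and $N/\mathfrak{a}N$ is artinian, then each subquotient $\mathfrak{a}^kN/\mathfrak{a}^{k+1}N$ is a homomorphic image of $(\mathfrak{a}^k/\mathfrak{a}^{k+1})\otimes_{R/\mathfrak{a}}(N/\mathfrak{a}N)$, hence of a finite direct sum of copies of $N/\mathfrak{a}N$ (here $R$ noetherian is used), so it is artinian; the finite filtration then forces $N$ artinian. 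The paper does exactly this, stating the conclusion in one line. Drop the linear-compactness detour and your proof is complete.
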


\begin{proof}
$\mathrm{(1)}\Rightarrow \mathrm{(2)}$ Assume that $\mathrm{H}_{i}^{\mathfrak{a}}(M)$ is artinian for $i< t$. Then $\mathrm{Cosupp}_{R}\mathrm{H}_{i}^{\mathfrak{a}}(M)\subseteq \mathrm{V}(\mathrm{Ann}_{R}\mathrm{H}_{i}^{\mathfrak{a}}(M))\subseteq \mathrm{V}(\mathfrak{a})$ for $i< t$. Thus we have $\mathfrak{a}\subseteq \sqrt{\mathfrak{a}}\subseteq \sqrt{\mathrm{Ann}_{R}\mathrm{H}_{i}^{\mathfrak{a}}(M)}$ for $i< t$.

$\mathrm{(2)}\Rightarrow \mathrm{(1)}$ Using induction on $t$. If $t=1$, then $\mathfrak{a}\subseteq \sqrt{\mathrm{Ann}_{R}\mathrm{H}_{0}^{\mathfrak{a}}(M)}$, so that there is an integer $l\geq 1$ such that $\mathfrak{a}^{l}\mathrm{H}_{0}^{\mathfrak{a}}(M)=0$ and $\mathrm{H}_{0}^{\mathfrak{a}}(M)/\mathfrak{a}^{l}\mathrm{H}_{0}^{\mathfrak{a}}(M)\cong \mathrm{H}_{0}^{\mathfrak{a}}(M)$. Since $M/\mathfrak{a}M$ is artinian, we get $M/\mathfrak{a}^{l}M$ is artinian. Hence $\mathrm{H}_{0}^{\mathfrak{a}}(M)$ is artinian by the epimorphism $M/\mathfrak{a}^{l}M \rightarrow \mathrm{H}_{0}^{\mathfrak{a}}(M)$. Now suppose inductively that $t> 1$ and we have established the result for smaller values of $t-1$. By assumption that $\mathrm{H}_{i}^{\mathfrak{a}}(M)$ is artinian for $i=0,1,\ldots,t-2$ and it remains to prove that $\mathrm{H}_{t-1}^{\mathfrak{a}}(M)$ is artinian. Since $\mathrm{Tor}_{t-1}^{R}(R/\mathfrak{a},M)$ is artinian and $\mathrm{Tor}_{t-1}^{R}(R/\mathfrak{a},\mathrm{H}_{i}^{\mathfrak{a}}(M))$ is artinian for
$i< t-1$, $R/\mathfrak{a}\otimes_{R}\mathrm{H}_{t-1}^{\mathfrak{a}}(M)$ is artinian by Lemma \ref{lem:2.7}. On the other hand, there exists $s> 0$ such that $\mathfrak{a}^{s}\mathrm{H}_{t-1}^{\mathfrak{a}}(M)=0$ by assumption. Hence $\mathrm{H}_{t-1}^{\mathfrak{a}}(M)\cong \mathrm{H}_{t-1}^{\mathfrak{a}}(M)/\mathfrak{a}^{s}\mathrm{H}_{t-1}^{\mathfrak{a}}(M)$ is artinian. This completes the inductive step.
\end{proof}

Lemma \ref{lem:3.1} provides some motivation for the following definition.

\begin{df}\label{df:3.0} Let $M$ be a linearly compact $R$-module such that $\mathrm{Tor}_{i}^{R}(R/\mathfrak{a},M)$ is artinian for every integer $i$. The artinianness dimension of $M$ with respect to $\mathfrak{a}$ is
\begin{center}$\begin{aligned}
r_{\mathfrak{a}}(M):
&= \mathrm{inf}\{i> 0 \hspace{0.03cm}|\hspace{0.03cm}
\mathrm{H}^{\mathfrak{a}}_{i}(M)~\mathrm{is~not~artinian}\}\\
&= \mathrm{inf}\{i> 0 \hspace{0.03cm}|\hspace{0.03cm}
\mathfrak{a}\nsubseteq \sqrt{\mathrm{Ann}_{R}\mathrm{H}_{i}^{\mathfrak{a}}(M)}\}.
\end{aligned}$\end{center}
Note that $r_{\mathfrak{a}}(M)$ is either a positive integer or $\infty$.
\end{df}

In the situation of the above definition, it is reasonable to regard the condition that $\mathfrak{a}\subseteq \sqrt{\mathrm{Ann}_{R}\mathrm{H}_{i}^{\mathfrak{a}}(M)}$  as asserting that $\mathrm{H}_{i}^{\mathfrak{a}}(M)$ is `small'  in a sense, because if
this condition holds for all $i$ less than some positive integer $t$, then $\mathrm{H}_{i}^{\mathfrak{a}}(M)$ is artinian for all $i< t$ (by Lemma \ref{lem:3.1}). However, sometimes it is more realistic to hope for a weaker condition than `$\mathfrak{a}\subseteq \sqrt{\mathrm{Ann}_{R}\mathrm{H}_{i}^{\mathfrak{a}}(M)}$': we give another ideal $\mathfrak{b}$ of $R$ with $\mathfrak{b}\subseteq \mathfrak{a}$, think of $\mathrm{H}_{i}^{\mathfrak{a}}(M)$ as being `small' relative to $\mathfrak{b}$ if $\mathfrak{b}\subseteq \sqrt{\mathrm{Ann}_{R}\mathrm{H}_{i}^{\mathfrak{a}}(M)}$.

\begin{df}\label{df:3.2}
Let $\mathfrak{a}, \mathfrak{b}$ be two ideals of $R$ with $\mathfrak{b}\subseteq \mathfrak{a}$, $M$ a linearly compact $R$-module such that $\mathrm{Tor}_{i}^{R}(R/\mathfrak{a},M)$ is artinian for every integer $i$. The $\mathfrak{b}$-artinianness dimension of $M$ relative to $\mathfrak{a}$ is defined as
\begin{center}$\begin{aligned}
r_{\mathfrak{a}}^{\mathfrak{b}}(M):
&= \mathrm{inf}\{i> 0 \hspace{0.03cm}|\hspace{0.03cm}
\mathfrak{b}\nsubseteq \sqrt{\mathrm{Ann}_{R}\mathrm{H}_{i}^{\mathfrak{a}}(M)}\}\\
&= \mathrm{inf}\{i> 0 \hspace{0.03cm}|\hspace{0.03cm}
\mathrm{mag}_{R}\mathfrak{b}^{t}\mathrm{H}_{i}^{\mathfrak{a}}(M)\geq 0~\mathrm{for~all}~t\in \mathbb{N}\}.
\end{aligned}$\end{center}
\end{df}

\begin{rem}\label{rem:3.0}\rm
$\mathrm{(1)}$ Note that $r_{\mathfrak{a}}^{\mathfrak{b}}(M)$ is either a positive integer or $\infty$ because $\mathfrak{b}\subseteq \mathfrak{a}\subseteq \sqrt{\mathrm{Ann}_{R}\mathrm{H}_{0}^{\mathfrak{a}}(M)}$.

$\mathrm{(2)}$ It is easy to see that $r_{\mathfrak{a}}^{\mathfrak{a}}(M)=r_{\mathfrak{a}}(M)$.
\end{rem}

Suppose that $n$ is a  non-negative integer. Set
\begin{center}$r_{\mathfrak{a}}^{\mathfrak{b}}(M)_{n}:
=\mathrm{inf}\{i> 0 \hspace{0.03cm}|\hspace{0.03cm}
\mathrm{mag}_{R}\mathfrak{b}^{t}\mathrm{H}_{i}^{\mathfrak{a}}(M)\geq n~\mathrm{for~all}~t\in \mathbb{N}\}$,\end{center}
\begin{center}$\widetilde{r}_{\mathfrak{a}}^{\mathfrak{b}}(M)^{n}:
=\mathrm{inf}\{r_{\mathfrak{a}R_{\mathfrak{p}}}^{\mathfrak{b}R_{\mathfrak{p}}}(_{\mathfrak{p}}M) \hspace{0.03cm}|\hspace{0.03cm}
\mathfrak{p}\in \mathrm{Spec}R~\mathrm{and}~\mathrm{dim}R/\mathfrak{p}\geq n\}$.\end{center} Next we investigate the relationship between $r_{\mathfrak{a}}^{\mathfrak{b}}(M)_{n}$ and $\widetilde{r}_{\mathfrak{a}}^{\mathfrak{b}}(M)^{n}$.

\begin{lem}\label{lem:3.4}
Let $\mathfrak{b}\subseteq \mathfrak{a}$ be ideals of $R$,  $M$ a linearly compact $R$-module so that $\mathrm{Tor}_{i}^{R}(R/\mathfrak{a},M)$ is artinian for all $i$. Then for every non-negative integer $n$,
\begin{center}
$r_{\mathfrak{a}}^{\mathfrak{b}}(M)_{n}\leq \widetilde{r}_{\mathfrak{a}}^{\mathfrak{b}}(M)^{n}$.
\end{center}
\end{lem}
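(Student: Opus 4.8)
The plan is to prove the inequality directly, by exhibiting a single positive integer $i$ with $i\le\widetilde{r}_{\mathfrak{a}}^{\mathfrak{b}}(M)^{n}$ that lies in the set defining $r_{\mathfrak{a}}^{\mathfrak{b}}(M)_{n}$, i.e. with $\mathrm{mag}_{R}\mathfrak{b}^{t}\mathrm{H}_{i}^{\mathfrak{a}}(M)\ge n$ for all $t\in\mathbb{N}$. We may assume $r:=\widetilde{r}_{\mathfrak{a}}^{\mathfrak{b}}(M)^{n}<\infty$, as otherwise there is nothing to prove; since the numbers $r_{\mathfrak{a}R_{\mathfrak{p}}}^{\mathfrak{b}R_{\mathfrak{p}}}({}_{\mathfrak{p}}M)$ all belong to $\mathbb{N}_{>0}\cup\{\infty\}$, the infimum is then attained: there is $\mathfrak{p}_{0}\in\mathrm{Spec}\,R$ with $\mathrm{dim}\,R/\mathfrak{p}_{0}\ge n$ and $r_{\mathfrak{a}R_{\mathfrak{p}_{0}}}^{\mathfrak{b}R_{\mathfrak{p}_{0}}}({}_{\mathfrak{p}_{0}}M)=r$. (Here one tacitly uses that ${}_{\mathfrak{p}}M$ is again a linearly compact $R_{\mathfrak{p}}$-module with $\mathrm{Tor}_{i}^{R_{\mathfrak{p}}}(R_{\mathfrak{p}}/\mathfrak{a}R_{\mathfrak{p}},{}_{\mathfrak{p}}M)$ artinian for all $i$, so that $r_{\mathfrak{a}R_{\mathfrak{p}}}^{\mathfrak{b}R_{\mathfrak{p}}}({}_{\mathfrak{p}}M)$ is defined in the sense of Definition \ref{df:3.2}; this rests on the facts that colocalization carries linearly compact modules to linearly compact modules and artinian modules to artinian modules, together with $\mathrm{Tor}_{i}^{R_{\mathfrak{p}}}(R_{\mathfrak{p}}/\mathfrak{a}R_{\mathfrak{p}},{}_{\mathfrak{p}}M)\cong{}_{\mathfrak{p}}\mathrm{Tor}_{i}^{R}(R/\mathfrak{a},M)$.)

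Now I would argue with $i=r$. By minimality of $r$ in the set defining $r_{\mathfrak{a}R_{\mathfrak{p}_{0}}}^{\mathfrak{b}R_{\mathfrak{p}_{0}}}({}_{\mathfrak{p}_{0}}M)$ we have $\mathfrak{b}R_{\mathfrak{p}_{0}}\nsubseteq\sqrt{\mathrm{Ann}_{R_{\mathfrak{p}_{0}}}\mathrm{H}_{r}^{\mathfrak{a}R_{\mathfrak{p}_{0}}}({}_{\mathfrak{p}_{0}}M)}$, equivalently (as $\mathfrak{b}R_{\mathfrak{p}_{0}}$ is finitely generated) $(\mathfrak{b}R_{\mathfrak{p}_{0}})^{t}\,\mathrm{H}_{r}^{\mathfrak{a}R_{\mathfrak{p}_{0}}}({}_{\mathfrak{p}_{0}}M)\neq0$ for every $t$. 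The decisive input is that local homology commutes with colocalization, $\mathrm{H}_{i}^{\mathfrak{a}R_{\mathfrak{p}}}({}_{\mathfrak{p}}M)\cong{}_{\mathfrak{p}}\mathrm{H}_{i}^{\mathfrak{a}}(M)$ for linearly compact $M$; writing $N:=\mathrm{H}_{r}^{\mathfrak{a}}(M)$ we obtain $(\mathfrak{b}R_{\mathfrak{p}_{0}})^{t}\,{}_{\mathfrak{p}_{0}}N\neq0$ for all $t$. Using only $R$-linearity — for $f\colon R_{\mathfrak{p}_{0}}\to N$ and $c\in\mathfrak{b}^{t}$, the map $x\mapsto f(\tfrac{c}{u}x)$ has image inside $cN\subseteq\mathfrak{b}^{t}N$ — one sees $(\mathfrak{b}R_{\mathfrak{p}_{0}})^{t}\,{}_{\mathfrak{p}_{0}}N\subseteq{}_{\mathfrak{p}_{0}}(\mathfrak{b}^{t}N)$ inside ${}_{\mathfrak{p}_{0}}N$, hence ${}_{\mathfrak{p}_{0}}(\mathfrak{b}^{t}N)\neq0$ for every $t$. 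Nonvanishing of the colocalization $\mathrm{Hom}_{R}(R_{\mathfrak{p}_{0}},\mathfrak{b}^{t}N)$ then produces, for each $t$, a prime $\mathfrak{q}_{t}\in\mathrm{Coass}_{R}(\mathfrak{b}^{t}N)\subseteq\mathrm{Cosupp}_{R}(\mathfrak{b}^{t}N)$ with $\mathfrak{q}_{t}\subseteq\mathfrak{p}_{0}$ (the coassociated-prime analogue of "$M_{\mathfrak{p}}\neq0$ iff some associated prime lies in $\mathfrak{p}$", via the behaviour of $\mathrm{Coass}$ under colocalization). Since $\mathfrak{q}_{t}\subseteq\mathfrak{p}_{0}$ we get $\mathrm{dim}\,R/\mathfrak{q}_{t}\ge\mathrm{dim}\,R/\mathfrak{p}_{0}\ge n$, so $\mathrm{mag}_{R}\mathfrak{b}^{t}N\ge n$ for all $t$. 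As $r>0$ and $N=\mathrm{H}_{r}^{\mathfrak{a}}(M)$, this places $r$ in the set defining $r_{\mathfrak{a}}^{\mathfrak{b}}(M)_{n}$, whence $r_{\mathfrak{a}}^{\mathfrak{b}}(M)_{n}\le r=\widetilde{r}_{\mathfrak{a}}^{\mathfrak{b}}(M)^{n}$.

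I expect the main obstacle to be the base-change isomorphism $\mathrm{H}_{i}^{\mathfrak{a}R_{\mathfrak{p}}}({}_{\mathfrak{p}}M)\cong{}_{\mathfrak{p}}\mathrm{H}_{i}^{\mathfrak{a}}(M)$ for a linearly compact module $M$ (and its companion for $\mathrm{Tor}$). Over a complete local base this drops out of Macdonald/Matlis duality combined with the standard commutation of local cohomology with localization; passing from the artinian, or complete-local, case to a general linearly compact $M$, and interchanging $\mathrm{Hom}_{R}(R_{\mathfrak{p}},-)$ with the inverse limits that define $\mathrm{H}_{i}^{\mathfrak{a}}$ and $\mathrm{Tor}$, are the points that need care. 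The coassociated-prime bookkeeping of the final step is elementary but likewise relies on the (standard) compatibility of $\mathrm{Coass}$ and $\mathrm{Cosupp}$ with colocalization, and on the fact that for a finitely generated ideal $\mathfrak{b}$ one has $\mathfrak{b}\subseteq\sqrt{\mathrm{Ann}_{R}L}$ iff $\mathfrak{b}^{t}L=0$ for some $t$.
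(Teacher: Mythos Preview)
Your proof is correct and follows essentially the same approach as the paper: both arguments rest on the commutation of local homology with colocalization for linearly compact modules (the paper cites \cite[Theorem~3.6]{CN2}) together with Yassemi's characterization linking nonvanishing of ${}_{\mathfrak{p}}(-)$ to coassociated primes and magnitude (the paper cites \cite[Corollary~2.16]{Y1}). The only cosmetic difference is that the paper phrases it as a contradiction---assuming $s:=\widetilde{r}_{\mathfrak{a}}^{\mathfrak{b}}(M)^{n}<r_{\mathfrak{a}}^{\mathfrak{b}}(M)_{n}$ and showing this forces $\widetilde{r}_{\mathfrak{a}}^{\mathfrak{b}}(M)^{n}>s$---whereas you argue directly by choosing a witness $\mathfrak{p}_{0}$ realizing the infimum and verifying that $r$ lies in the set defining $r_{\mathfrak{a}}^{\mathfrak{b}}(M)_{n}$.
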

\begin{proof}
Put $s=\widetilde{r}_{\mathfrak{a}}^{\mathfrak{b}}(M)^{n}$ and assume that $s< r_{\mathfrak{a}}^{\mathfrak{b}}(M)_{n}$. There is an integer $t$ such that $\mathrm{mag}_{R}\mathfrak{b}^{t}\mathrm{H}_{s}^{\mathfrak{a}}(M)< n$. By \cite[Corollary 2.16]{Y1}, for any $\mathfrak{p}\in \mathrm{Spec}R$ with $\mathrm{dim}R/\mathfrak{p}\geq n$, we have
\begin{center}
$_{\mathfrak{p}}(\mathfrak{b}^{t}\mathrm{H}_{s}^{\mathfrak{a}}(M))=0$.
\end{center}
And as $M$ is linearly compact, it follows from \cite[Theorem 3.6]{CN2} that $(\mathfrak{b}R_{\mathfrak{p}})^{t}\mathrm{H}_{s}^{\mathfrak{a}R_{\mathfrak{p}}}(_{\mathfrak{p}}M)=0$. Thus $\widetilde{r}_{\mathfrak{a}}^{\mathfrak{b}}(M)^{n}> s$, which is a contradiction.
\end{proof}

Let $T$ be a subset of $\mathrm{Spec}R$. Put
\begin{center}
$T_{> n}:=\{\mathfrak{p}\in T\hspace{0.03cm}|\hspace{0.03cm}\mathrm{dim}R/\mathfrak{p}> n\}$.
\end{center}

\begin{lem}\label{lem:3.5}
Let $\mathfrak{b}\subseteq \mathfrak{a}$ be ideals of $R$, $M$ a linearly compact $R$-module so that $\mathrm{Tor}_{i}^{R}(R/\mathfrak{a},M)$ is artinian for every integer $i$. If $(\mathrm{Coass}_{R}\mathrm{H}_{r_{\mathfrak{a}}^{\mathfrak{b}}(M)_{n}}^{\mathfrak{a}}(M))_{\geq n}$ is finite for any non-negative integer $n$, then
\begin{center}
$r_{\mathfrak{a}}^{\mathfrak{b}}(M)_{n}=\widetilde{r}_{\mathfrak{a}}^{\mathfrak{b}}(M)^{n}$.
\end{center}
\end{lem}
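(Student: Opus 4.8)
The plan is to establish the reverse inequality $\widetilde{r}_{\mathfrak{a}}^{\mathfrak{b}}(M)^n\leq r_{\mathfrak{a}}^{\mathfrak{b}}(M)_n$; together with Lemma \ref{lem:3.4} this gives the desired equality. Write $s:=r_{\mathfrak{a}}^{\mathfrak{b}}(M)_n$. If $s=\infty$, one argues exactly as in the proof of Lemma \ref{lem:3.4}: for each $i>0$ there is a $t$ with $\mathrm{mag}_{R}\mathfrak{b}^{t}\mathrm{H}_{i}^{\mathfrak{a}}(M)<n$, so for every prime $\mathfrak{p}$ with $\mathrm{dim}R/\mathfrak{p}\geq n$ one gets ${}_{\mathfrak{p}}(\mathfrak{b}^{t}\mathrm{H}_{i}^{\mathfrak{a}}(M))=0$ by \cite[Corollary 2.16]{Y1}, hence $(\mathfrak{b}R_{\mathfrak{p}})^{t}\mathrm{H}_{i}^{\mathfrak{a}R_{\mathfrak{p}}}({}_{\mathfrak{p}}M)=0$ by \cite[Theorem 3.6]{CN2}; this forces $r_{\mathfrak{a}R_{\mathfrak{p}}}^{\mathfrak{b}R_{\mathfrak{p}}}({}_{\mathfrak{p}}M)=\infty$ for all such $\mathfrak{p}$, so $\widetilde{r}_{\mathfrak{a}}^{\mathfrak{b}}(M)^n=\infty=s$. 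From now on assume $s<\infty$; then, being the infimum of a set of positive integers which it realizes, $s$ belongs to that set, i.e.\ $\mathrm{mag}_{R}\mathfrak{b}^{t}\mathrm{H}_{s}^{\mathfrak{a}}(M)\geq n$ for every $t\in\mathbb{N}$. Write $N:=\mathrm{H}_{s}^{\mathfrak{a}}(M)$.

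The heart of the argument is a pigeonhole over coassociated primes. The ingredients I would assemble are: (i) $\mathrm{Coass}_{R}(\mathfrak{b}^{t}N)\subseteq\mathrm{Coass}_{R}N$, because $\mathfrak{b}^{t}N$ (with $\mathfrak{b}^{t}$ finitely generated, as $R$ is noetherian) is a homomorphic image of a finite direct sum of copies of $N$; (ii) $\mathrm{mag}_{R}\mathfrak{b}^{t}N\geq n$ yields a prime $\mathfrak{p}_{t}\in\mathrm{Coass}_{R}(\mathfrak{b}^{t}N)$ with $\mathrm{dim}R/\mathfrak{p}_{t}\geq n$, using the standard relation $\mathrm{Cosupp}_{R}L=\bigcup_{\mathfrak{q}\in\mathrm{Coass}_{R}L}\mathrm{V}(\mathfrak{q})$ between magnitude, cosupport and coassociated primes; and (iii) $\mathrm{Coass}_{R}L\subseteq\mathrm{Cosupp}_{R}L=\{\mathfrak{p}\in\mathrm{Spec}R\mid{}_{\mathfrak{p}}L\neq 0\}$, the converse of the implication used in Lemma \ref{lem:3.4}, so that ${}_{\mathfrak{p}_{t}}(\mathfrak{b}^{t}N)\neq 0$. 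By (i) and (ii) each $\mathfrak{p}_{t}$ lies in the finite set $\Sigma:=(\mathrm{Coass}_{R}N)_{\geq n}$ (finite by hypothesis, since $N=\mathrm{H}_{r_{\mathfrak{a}}^{\mathfrak{b}}(M)_n}^{\mathfrak{a}}(M)$). For $\mathfrak{p}\in\Sigma$ set $A_{\mathfrak{p}}:=\{t\in\mathbb{N}\mid{}_{\mathfrak{p}}(\mathfrak{b}^{t}N)\neq 0\}$; since $\mathrm{Hom}_{R}(R_{\mathfrak{p}},-)$ is left exact and $\mathfrak{b}^{t+1}N\subseteq\mathfrak{b}^{t}N$, each $A_{\mathfrak{p}}$ is downward closed in $\mathbb{N}$, while $\bigcup_{\mathfrak{p}\in\Sigma}A_{\mathfrak{p}}=\mathbb{N}$ by (iii). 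Finiteness of $\Sigma$ forces some $A_{\mathfrak{p}}$ to be infinite, hence equal to $\mathbb{N}$. Fix such a $\mathfrak{p}$: then $\mathrm{dim}R/\mathfrak{p}\geq n$ and ${}_{\mathfrak{p}}(\mathfrak{b}^{t}\mathrm{H}_{s}^{\mathfrak{a}}(M))\neq 0$ for all $t\in\mathbb{N}$.

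It remains to transfer this to $R_{\mathfrak{p}}$. By \cite[Theorem 3.6]{CN2} together with the compatibility of colocalization with multiplication by $\mathfrak{b}^{t}$, one has $(\mathfrak{b}R_{\mathfrak{p}})^{t}\mathrm{H}_{s}^{\mathfrak{a}R_{\mathfrak{p}}}({}_{\mathfrak{p}}M)\cong{}_{\mathfrak{p}}(\mathfrak{b}^{t}\mathrm{H}_{s}^{\mathfrak{a}}(M))\neq 0$ for every $t$, so $\mathrm{mag}_{R_{\mathfrak{p}}}(\mathfrak{b}R_{\mathfrak{p}})^{t}\mathrm{H}_{s}^{\mathfrak{a}R_{\mathfrak{p}}}({}_{\mathfrak{p}}M)\geq 0$ for all $t$; by Definition \ref{df:3.2}, applied to the linearly compact $R_{\mathfrak{p}}$-module ${}_{\mathfrak{p}}M$, this says $r_{\mathfrak{a}R_{\mathfrak{p}}}^{\mathfrak{b}R_{\mathfrak{p}}}({}_{\mathfrak{p}}M)\leq s$, whence $\widetilde{r}_{\mathfrak{a}}^{\mathfrak{b}}(M)^n\leq s$. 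Combined with Lemma \ref{lem:3.4}, this finishes the proof. The step I expect to be the real obstacle is not the pigeonhole but the supporting facts on coassociated primes and colocalization — especially item (iii), i.e.\ securing the converse of \cite[Corollary 2.16]{Y1} (equivalently, the colocalization description of $\mathrm{Cosupp}$), together with verifying that the isomorphism ${}_{\mathfrak{p}}(\mathfrak{b}^{t}\mathrm{H}_{s}^{\mathfrak{a}}(M))\cong(\mathfrak{b}R_{\mathfrak{p}})^{t}\mathrm{H}_{s}^{\mathfrak{a}R_{\mathfrak{p}}}({}_{\mathfrak{p}}M)$ really follows from \cite[Theorem 3.6]{CN2}, and that ${}_{\mathfrak{p}}M$ satisfies the standing hypotheses over $R_{\mathfrak{p}}$.
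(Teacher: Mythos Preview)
Your argument is correct and is essentially the contrapositive of the paper's proof: the paper assumes for contradiction that $s<\widetilde{r}_{\mathfrak{a}}^{\mathfrak{b}}(M)^{n}$, obtains for each of the finitely many $\mathfrak{p}_{i}\in(\mathrm{Coass}_{R}\mathrm{H}_{s}^{\mathfrak{a}}(M))_{\geq n}$ an exponent $l_{i}$ with $(\mathfrak{b}R_{\mathfrak{p}_{i}})^{l_{i}}\mathrm{H}_{s}^{\mathfrak{a}R_{\mathfrak{p}_{i}}}({}_{\mathfrak{p}_{i}}M)=0$, takes $l=\max l_{i}$, and then invokes \cite[Theorem 3.6]{CN2} and \cite[Theorem 3.8]{N} to conclude $\mathrm{mag}_{R}\mathfrak{b}^{l}\mathrm{H}_{s}^{\mathfrak{a}}(M)<n$, contradicting the definition of $s$. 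The supporting facts you flag as potential obstacles --- your items (i) and (iii) and the colocalization isomorphism --- are exactly what the paper packages into the citations \cite[Theorem 3.6]{CN2} and \cite[Theorem 3.8]{N}, so your concerns are well placed but already addressed in the literature the paper uses.
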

\begin{proof}
Put $s=r_{\mathfrak{a}}^{\mathfrak{b}}(M)_{n}$ and assume that
\begin{center}
$(\mathrm{Coass}_{R}\mathrm{H}_{s}^{\mathfrak{a}}(M))_{\geq n}=\{\mathfrak{p}_{1},\ldots,\mathfrak{p}_{k}\}$.
\end{center}
It is enough to show that $\widetilde{r}_{\mathfrak{a}}^{\mathfrak{b}}(M)^{n}\leq s$ by Lemma \ref{lem:3.4}. Suppose on the contrary that $s< \widetilde{r}_{\mathfrak{a}}^{\mathfrak{b}}(M)^{n}$. It implies that $s< r_{\mathfrak{a}R_{\mathfrak{p}_{i}}}^{\mathfrak{b}R_{\mathfrak{p}_{i}}}(_{\mathfrak{p}_{i}}M)$ for $1\leq i\leq k$. Hence there exists $l_{i}$ such that
\begin{center}
$(\mathfrak{b}R_{\mathfrak{p}_{i}})^{l_{i}}
\mathrm{H}_{s}^{\mathfrak{a}R_{\mathfrak{p}_{i}}}(_{\mathfrak{p}_{i}}M)=0$.
\end{center}
Set $l=\mathrm{max}\{l_{1},\ldots,l_{k}\}$. Then for $1\leq i\leq k$, one has
\begin{center}
$(\mathfrak{b}R_{\mathfrak{p}_{i}})^{l}
\mathrm{H}_{s}^{\mathfrak{a}R_{\mathfrak{p}_{i}}}(_{\mathfrak{p}_{i}}M)=0$.
\end{center}
As $M$ is linearly compact, it follows from \cite[Theorem 3.6]{CN2} that $_{\mathfrak{p}_{i}}(\mathfrak{b}^{l}\mathrm{H}_{s}^{\mathfrak{a}}(M))=0$. Hence $\mathrm{mag}_{R}\mathfrak{b}^{l}\mathrm{H}_{s}^{\mathfrak{a}}(M)< n$ by \cite[Theorem 3.8]{N}. This means that $s< r_{\mathfrak{a}}^{\mathfrak{b}}(M)_{n}$, which is a contradiction.
\end{proof}

Set $n=0$ in Lemma \ref{lem:3.5} and use the fact that $r_{\mathfrak{a}}^{\mathfrak{b}}(M)=r_{\mathfrak{a}}^{\mathfrak{b}}(M)_{0}$, we obtain the local-global principle for the artinianness dimension.

\begin{thm}\label{thm:3.6}
Let $\mathfrak{b}\subseteq \mathfrak{a}$ be ideals of $R$, $M$ a linearly compact $R$-module so that $\mathrm{Tor}_{i}^{R}(R/\mathfrak{a},M)$ is artinian for every integer $i$ and the set $\mathrm{Coass}_{R}\mathrm{H}_{\mathrm{r}_{\mathfrak{a}}^{\mathfrak{b}}(M)}^{\mathfrak{a}}(M)$ is finite. Then
\begin{center}
$r_{\mathfrak{a}}^{\mathfrak{b}}(M)=
\mathrm{inf}\{r_{\mathfrak{a}R_{\mathfrak{p}}}^{\mathfrak{b}R_{\mathfrak{p}}}(_{\mathfrak{p}}M) \hspace{0.03cm}|\hspace{0.03cm}\mathfrak{p}\in \mathrm{Spec}R\}.$
\end{center}
In particular, if $\mathfrak{a}=\mathfrak{b}$, then
\begin{center}
$r_{\mathfrak{a}}(M)=
\mathrm{inf}\{r_{\mathfrak{a}R_{\mathfrak{p}}}(_{\mathfrak{p}}M) \hspace{0.03cm}|\hspace{0.03cm}\mathfrak{p}\in \mathrm{Spec}R\}.$
\end{center}
\end{thm}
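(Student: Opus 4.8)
The plan is to obtain Theorem~\ref{thm:3.6} as the case $n=0$ of Lemma~\ref{lem:3.5}, combined with two bookkeeping identifications. First I would observe that $r_{\mathfrak{a}}^{\mathfrak{b}}(M)_0$ is, by its very definition, the infimum of those $i>0$ with $\operatorname{mag}_R \mathfrak{b}^t\operatorname{H}_i^{\mathfrak{a}}(M)\geq 0$ for all $t\in\mathbb{N}$, which is verbatim the defining expression for $r_{\mathfrak{a}}^{\mathfrak{b}}(M)$ in Definition~\ref{df:3.2}; hence $r_{\mathfrak{a}}^{\mathfrak{b}}(M)_0=r_{\mathfrak{a}}^{\mathfrak{b}}(M)$. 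Second, since $\dim R/\mathfrak{p}\geq 0$ holds for every $\mathfrak{p}\in\operatorname{Spec}R$, the index set defining $\widetilde{r}_{\mathfrak{a}}^{\mathfrak{b}}(M)^0$ is all of $\operatorname{Spec}R$, so $\widetilde{r}_{\mathfrak{a}}^{\mathfrak{b}}(M)^0=\inf\{r_{\mathfrak{a}R_{\mathfrak{p}}}^{\mathfrak{b}R_{\mathfrak{p}}}({}_{\mathfrak{p}}M)\mid\mathfrak{p}\in\operatorname{Spec}R\}$.

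Next I would check the hypothesis of Lemma~\ref{lem:3.5} at $n=0$. Using $r_{\mathfrak{a}}^{\mathfrak{b}}(M)_0=r_{\mathfrak{a}}^{\mathfrak{b}}(M)$ and again that every prime satisfies $\dim R/\mathfrak{p}\geq 0$, the set $(\operatorname{Coass}_R\operatorname{H}_{r_{\mathfrak{a}}^{\mathfrak{b}}(M)_0}^{\mathfrak{a}}(M))_{\geq 0}$ coincides with $\operatorname{Coass}_R\operatorname{H}_{r_{\mathfrak{a}}^{\mathfrak{b}}(M)}^{\mathfrak{a}}(M)$, which is finite by assumption. Therefore Lemma~\ref{lem:3.5} applies with $n=0$ and gives $r_{\mathfrak{a}}^{\mathfrak{b}}(M)_0=\widetilde{r}_{\mathfrak{a}}^{\mathfrak{b}}(M)^0$; combining this with the two identifications above yields the first displayed equality $r_{\mathfrak{a}}^{\mathfrak{b}}(M)=\inf\{r_{\mathfrak{a}R_{\mathfrak{p}}}^{\mathfrak{b}R_{\mathfrak{p}}}({}_{\mathfrak{p}}M)\mid\mathfrak{p}\in\operatorname{Spec}R\}$.

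For the ``in particular'' clause I would specialize $\mathfrak{b}=\mathfrak{a}$ and invoke Remark~\ref{rem:3.0}(2), which records $r_{\mathfrak{a}}^{\mathfrak{a}}(M)=r_{\mathfrak{a}}(M)$; the same identity holds over $R_{\mathfrak{p}}$ for the colocalization ${}_{\mathfrak{p}}M$, so the equality just established reduces to $r_{\mathfrak{a}}(M)=\inf\{r_{\mathfrak{a}R_{\mathfrak{p}}}({}_{\mathfrak{p}}M)\mid\mathfrak{p}\in\operatorname{Spec}R\}$.

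I do not expect any genuine obstacle at this stage: the content of the theorem is entirely absorbed into Lemma~\ref{lem:3.5}, and what remains is only the translation between the notations $r_{\mathfrak{a}}^{\mathfrak{b}}(M)_0$, $r_{\mathfrak{a}}^{\mathfrak{b}}(M)$, $\widetilde{r}_{\mathfrak{a}}^{\mathfrak{b}}(M)^0$ and the statement. The \emph{delicate} work lives upstream: Lemma~\ref{lem:2.7}, where a filtration/convergence argument on the spectral sequence $\operatorname{Tor}_p^R(R/\mathfrak{a},\operatorname{H}_q^{\mathfrak{a}}(M))\Rightarrow\operatorname{Tor}_{p+q}^R(R/\mathfrak{a},M)$ keeps $R/\mathfrak{a}\otimes_R\operatorname{H}_s^{\mathfrak{a}}(M)$ inside the Serre subcategory $\mathcal{S}$; and Lemmas~\ref{lem:3.4}--\ref{lem:3.5}, where one must pass freely between $\,{}_{\mathfrak{p}}(\mathfrak{b}^t\operatorname{H}_s^{\mathfrak{a}}(M))$ and $(\mathfrak{b}R_{\mathfrak{p}})^t\operatorname{H}_s^{\mathfrak{a}R_{\mathfrak{p}}}({}_{\mathfrak{p}}M)$, and between the vanishing of colocalizations at all primes $\mathfrak{p}$ with $\dim R/\mathfrak{p}\geq n$ and the magnitude estimate $\operatorname{mag}_R(-)<n$, via the cited results on the behaviour of local homology under colocalization.
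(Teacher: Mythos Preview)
Your proposal is correct and matches the paper's approach exactly: the paper derives Theorem~\ref{thm:3.6} by setting $n=0$ in Lemma~\ref{lem:3.5} together with the identification $r_{\mathfrak{a}}^{\mathfrak{b}}(M)=r_{\mathfrak{a}}^{\mathfrak{b}}(M)_{0}$, and you have spelled out precisely these steps (plus the trivial unpacking of $\widetilde{r}_{\mathfrak{a}}^{\mathfrak{b}}(M)^{0}$ and the specialization $\mathfrak{b}=\mathfrak{a}$ via Remark~\ref{rem:3.0}(2)).
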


Nam \cite{N} defined an $R$-module $M$ is \emph{$\mathfrak{a}$-coartinian} if $\mathrm{Cosupp}_{R}M\subseteq \mathrm{V}(\mathfrak{a})$ and $\mathrm{Tor}_{i}^{R}(R/\mathfrak{a},M)$ is artinian for $i\geq 0$. Define $c_{\mathfrak{a}}^{\mathfrak{b}}(M):= \mathrm{inf}\{i\geq 0 \hspace{0.03cm}|\hspace{0.03cm}
\mathrm{H}_{i}^{\mathfrak{a}}(M)~\mathrm{is~not}~\mathfrak{b}$-$\mathrm{coartinian}\}$. In view of Theorem \ref{thm:3.6}, the next corollary provides another condition that makes the local-global principle for the artinianness dimension valid.

\begin{cor}\label{cor:4.3}
Let $\mathfrak{b}\subseteq \mathfrak{a}$ be ideals of $R$, $M$ a semi-discrete linearly compact $R$-module such that $\mathrm{Tor}_{i}^{R}(R/\mathfrak{a},M)$ is artinian for every integer $i$. If $r_{\mathfrak{a}}(M)\neq c_{\mathfrak{a}}^{\mathfrak{b}}(M)$, then
\begin{center}
$r_{\mathfrak{a}}^{\mathfrak{b}}(M)=
\mathrm{inf}\{r_{\mathfrak{a}R_{\mathfrak{p}}}^{\mathfrak{b}R_{\mathfrak{p}}}(_{\mathfrak{p}}M) \hspace{0.03cm}|\hspace{0.03cm}\mathfrak{p}\in \mathrm{Spec}R\}.$
\end{center}
\end{cor}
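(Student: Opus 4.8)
The plan is to reduce everything to Theorem~\ref{thm:3.6}: it is enough to show that $\mathrm{Coass}_{R}\mathrm{H}_{r_{\mathfrak{a}}^{\mathfrak{b}}(M)}^{\mathfrak{a}}(M)$ is a finite set, and then the stated equality follows at once. So the whole argument is about exploiting the hypothesis $r_{\mathfrak{a}}(M)\neq c_{\mathfrak{a}}^{\mathfrak{b}}(M)$ to locate the relevant homology module and to bound its coassociated primes. First I would record two inequalities valid with no extra assumption. Since $\mathfrak{b}\subseteq\mathfrak{a}$, for each $i<r_{\mathfrak{a}}(M)$ one has $\mathfrak{b}\subseteq\mathfrak{a}\subseteq\sqrt{\mathrm{Ann}_{R}\mathrm{H}_{i}^{\mathfrak{a}}(M)}$, so $r_{\mathfrak{a}}(M)\leq r_{\mathfrak{a}}^{\mathfrak{b}}(M)$. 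Moreover, for such $i$ the module $\mathrm{H}_{i}^{\mathfrak{a}}(M)$ is artinian by Lemma~\ref{lem:3.1}, while $\mathrm{Cosupp}_{R}\mathrm{H}_{i}^{\mathfrak{a}}(M)\subseteq\mathrm{V}(\mathrm{Ann}_{R}\mathrm{H}_{i}^{\mathfrak{a}}(M))\subseteq\mathrm{V}(\mathfrak{a})\subseteq\mathrm{V}(\mathfrak{b})$, and $\mathrm{Tor}_{j}^{R}(R/\mathfrak{b},N)$ is artinian for all $j$ whenever $N$ is artinian; thus $\mathrm{H}_{i}^{\mathfrak{a}}(M)$ is $\mathfrak{b}$-coartinian, and therefore $r_{\mathfrak{a}}(M)\leq c_{\mathfrak{a}}^{\mathfrak{b}}(M)$. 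In particular, the hypothesis $r_{\mathfrak{a}}(M)\neq c_{\mathfrak{a}}^{\mathfrak{b}}(M)$ upgrades to $r_{\mathfrak{a}}(M)<c_{\mathfrak{a}}^{\mathfrak{b}}(M)$ and forces $r_{\mathfrak{a}}(M)<\infty$ (otherwise every $\mathrm{H}_{i}^{\mathfrak{a}}(M)$ would be $\mathfrak{b}$-coartinian and $c_{\mathfrak{a}}^{\mathfrak{b}}(M)=\infty=r_{\mathfrak{a}}(M)$).

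The crux is the implication: if $r_{\mathfrak{a}}^{\mathfrak{b}}(M)>r_{\mathfrak{a}}(M)$, then $c_{\mathfrak{a}}^{\mathfrak{b}}(M)=r_{\mathfrak{a}}(M)$. Put $r=r_{\mathfrak{a}}(M)$. If $r<r_{\mathfrak{a}}^{\mathfrak{b}}(M)$, then $\mathfrak{b}^{l}\mathrm{H}_{r}^{\mathfrak{a}}(M)=0$ for some $l\geq 1$, whereas $\mathrm{H}_{r}^{\mathfrak{a}}(M)$ is not artinian by definition of $r$. Now a module $N$ that is $\mathfrak{b}$-coartinian and satisfies $\mathfrak{b}^{l}N=0$ is necessarily artinian: in the finite filtration $N\supseteq\mathfrak{b}N\supseteq\cdots\supseteq\mathfrak{b}^{l}N=0$ each quotient $\mathfrak{b}^{j}N/\mathfrak{b}^{j+1}N$ is a homomorphic image of $(\mathfrak{b}^{j}/\mathfrak{b}^{j+1})\otimes_{R}(N/\mathfrak{b}N)$, a tensor product of a finitely generated module with the artinian module $N/\mathfrak{b}N=\mathrm{Tor}_{0}^{R}(R/\mathfrak{b},N)$, hence artinian. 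Consequently $\mathrm{H}_{r}^{\mathfrak{a}}(M)$ is not $\mathfrak{b}$-coartinian, so $c_{\mathfrak{a}}^{\mathfrak{b}}(M)\leq r$, and with the bound of the first paragraph $c_{\mathfrak{a}}^{\mathfrak{b}}(M)=r$. Contrapositively, our hypothesis $c_{\mathfrak{a}}^{\mathfrak{b}}(M)\neq r_{\mathfrak{a}}(M)$ together with $r_{\mathfrak{a}}(M)\leq r_{\mathfrak{a}}^{\mathfrak{b}}(M)$ gives $r_{\mathfrak{a}}^{\mathfrak{b}}(M)=r_{\mathfrak{a}}(M)=:r$, and then $c_{\mathfrak{a}}^{\mathfrak{b}}(M)>r$ says exactly that $\mathrm{H}_{r}^{\mathfrak{a}}(M)=\mathrm{H}_{r_{\mathfrak{a}}^{\mathfrak{b}}(M)}^{\mathfrak{a}}(M)$ is $\mathfrak{b}$-coartinian.

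It then remains to check that a $\mathfrak{b}$-coartinian $R$-module $N$ has $\mathrm{Coass}_{R}N$ finite, and to apply this with $N=\mathrm{H}_{r_{\mathfrak{a}}^{\mathfrak{b}}(M)}^{\mathfrak{a}}(M)$. Since $\mathrm{Coass}_{R}N\subseteq\mathrm{Cosupp}_{R}N\subseteq\mathrm{V}(\mathfrak{b})$, any $\mathfrak{p}\in\mathrm{Coass}_{R}N$ contains $\mathfrak{b}$; a cocyclic homomorphic image $L$ of $N$ with $\mathrm{Ann}_{R}L=\mathfrak{p}$ is then annihilated by $\mathfrak{b}$, so it is a cocyclic homomorphic image of $N/\mathfrak{b}N$, which shows $\mathrm{Coass}_{R}N\subseteq\mathrm{Coass}_{R}(N/\mathfrak{b}N)$. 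As $N/\mathfrak{b}N=\mathrm{Tor}_{0}^{R}(R/\mathfrak{b},N)$ is artinian, it has only finitely many coassociated primes, hence so does $N$. Feeding the finiteness of $\mathrm{Coass}_{R}\mathrm{H}_{r_{\mathfrak{a}}^{\mathfrak{b}}(M)}^{\mathfrak{a}}(M)$ into Theorem~\ref{thm:3.6} yields the asserted equality.

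The point I expect to need the most care is precisely where semi-discrete linear compactness of $M$ is used; it is invisible in the bare logical skeleton above. It is needed so that coartinianness is well-behaved along the constructions occurring here: one must know that the ``$\mathfrak{b}^{l}N=0$ plus $\mathfrak{b}$-coartinian implies artinian'' principle genuinely applies to the modules $\mathrm{H}_{i}^{\mathfrak{a}}(M)$, and that the colocalizations $\mathrm{H}_{i}^{\mathfrak{a}R_{\mathfrak{p}}}({}_{\mathfrak{p}}M)$ entering through Theorem~\ref{thm:3.6} and the Cuong--Nam commutation results behave as expected; semi-discreteness is what propagates through local homology and colocalization and underlies Nam's theory of $\mathfrak{a}$-coartinian modules. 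Checking these compatibilities, rather than the combinatorics of the three invariants $r_{\mathfrak{a}}$, $r_{\mathfrak{a}}^{\mathfrak{b}}$, $c_{\mathfrak{a}}^{\mathfrak{b}}$, is the real work.
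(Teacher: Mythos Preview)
Your proposal is correct and follows essentially the same route as the paper. Both arguments first establish the identity $r_{\mathfrak{a}}(M)=\min\{r_{\mathfrak{a}}^{\mathfrak{b}}(M),c_{\mathfrak{a}}^{\mathfrak{b}}(M)\}$ (the paper states it as this clean formula, you reach the same content via the contrapositive ``$r_{\mathfrak{a}}^{\mathfrak{b}}(M)>r_{\mathfrak{a}}(M)\Rightarrow c_{\mathfrak{a}}^{\mathfrak{b}}(M)=r_{\mathfrak{a}}(M)$''), deduce from $r_{\mathfrak{a}}(M)\neq c_{\mathfrak{a}}^{\mathfrak{b}}(M)$ that $r_{\mathfrak{a}}^{\mathfrak{b}}(M)=r_{\mathfrak{a}}(M)$, and then feed a finiteness statement about $\mathrm{Coass}_{R}\mathrm{H}^{\mathfrak{a}}_{r_{\mathfrak{a}}^{\mathfrak{b}}(M)}(M)$ into Theorem~\ref{thm:3.6}.

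The one genuine difference is how that finiteness is obtained. The paper simply invokes \cite[Theorem~4.5]{CN2} (which uses the semi-discrete linearly compact hypothesis). You instead observe that $c_{\mathfrak{a}}^{\mathfrak{b}}(M)>r$ makes $\mathrm{H}^{\mathfrak{a}}_{r}(M)$ $\mathfrak{b}$-coartinian, and then give the elementary inclusion $\mathrm{Coass}_{R}N\subseteq\mathrm{Coass}_{R}(N/\mathfrak{b}N)$ for any $N$ with $\mathrm{Cosupp}_{R}N\subseteq\mathrm{V}(\mathfrak{b})$; since $N/\mathfrak{b}N$ is artinian, you are done. This is correct and more self-contained. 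In fact it shows that your worry in the final paragraph is misplaced: none of the steps in your argument (the filtration $\mathfrak{b}^{j}N/\mathfrak{b}^{j+1}N$, the Coass inclusion, or the appeal to Theorem~\ref{thm:3.6}) requires semi-discreteness, only linear compactness; the paper's use of that hypothesis enters solely through the citation of \cite[Theorem~4.5]{CN2}, which your direct argument bypasses.
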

\begin{proof}
We first claim that
\begin{center}
$r_{\mathfrak{a}}(M)=\mathrm{min}\{r_{\mathfrak{a}}^{\mathfrak{b}}(M),c_{\mathfrak{a}}^{\mathfrak{b}}(M)\}$.
\end{center}
Set $s=r_{\mathfrak{a}}(M)$. Then $\mathfrak{b}\subseteq \mathfrak{a}\subseteq \sqrt{\mathrm{Ann}_{R}\mathrm{H}_{i}^{\mathfrak{a}}(M)}$ for $i< s$ and $r_{\mathfrak{a}}(M)\leq r_{\mathfrak{a}}^{\mathfrak{b}}(M)$. If $t=c_{\mathfrak{a}}^{\mathfrak{b}}(M)< r_{\mathfrak{a}}(M)$, then $\mathrm{H}_{t}^{\mathfrak{a}}(M)$ is an artinian $R$-module. Since $\mathrm{Cosupp}_{R}\mathrm{H}_{t}^{\mathfrak{a}}(M)\subseteq \mathrm{V}(\mathfrak{a})\subseteq \mathrm{V}(\mathfrak{b})$, it follows that $\mathrm{H}_{t}^{\mathfrak{a}}(M)$ is a $\mathfrak{b}$-coartinian $R$-module, which is a contradiction. Whence $r_{\mathfrak{a}}(M)\leq c_{\mathfrak{a}}^{\mathfrak{b}}(M)$ and so
$r_{\mathfrak{a}}(M)\leq \mathrm{min}\{r_{\mathfrak{a}}^{\mathfrak{b}}(M),c_{\mathfrak{a}}^{\mathfrak{b}}(M)\}$.
Now suppose that $s< \mathrm{min}\{r_{\mathfrak{a}}^{\mathfrak{b}}(M),c_{\mathfrak{a}}^{\mathfrak{b}}(M)\}$.
There exists an integer $n$ such that $\mathfrak{b}^{n}\mathrm{H}^{\mathfrak{a}}_{s}(M)=0$, thus $\mathrm{H}^{\mathfrak{a}}_{s}(M)\cong \mathrm{H}^{\mathfrak{a}}_{s}(M)/\mathfrak{b}^{n}\mathrm{H}^{\mathfrak{a}}_{s}(M)$. Since $s< c_{\mathfrak{a}}^{\mathfrak{b}}(M)$, it follows that $\mathrm{H}^{\mathfrak{a}}_{s}(M)$ is $\mathfrak{b}$-coartinian, $\mathrm{H}^{\mathfrak{a}}_{s}(M)/\mathfrak{b}\mathrm{H}^{\mathfrak{a}}_{s}(M)$ is artinian. The $R$-module $\mathrm{H}^{\mathfrak{a}}_{s}(M)$ is artinian from the above isomorphism, which is a contradiction. Hence $r_{\mathfrak{a}}(M)\geq \mathrm{min}\{r_{\mathfrak{a}}^{\mathfrak{b}}(M),c_{\mathfrak{a}}^{\mathfrak{b}}(M)\}$.
Now since $r_{\mathfrak{a}}(M)\neq c_{\mathfrak{a}}^{\mathfrak{b}}(M)$, it follows that $r_{\mathfrak{a}}(M)< c_{\mathfrak{a}}^{\mathfrak{b}}(M)$ and $r_{\mathfrak{a}}(M)=r_{\mathfrak{a}}^{\mathfrak{b}}(M)$. The assertion follows from Theorem \ref{thm:3.6} and \cite[Theorem 4.5]{CN2}.
\end{proof}

\bigskip
\section{\bf Modules with the set of coassociated primes finite}

In this section, we introduce a class $FM_{\leq n}$ of $R$-modules and prove that all semi-discrete linearly compact $\mathfrak{a}$-coartinian $R$-modules satisfy the local-global principle for the artinianness dimension.

\begin{df}\label{df:2.0}
Let $n$ be an integer.

$\mathrm{(1)}$ An $R$-module $M$ is said to be in $FM_{\leq n}$, if there exists a submodule $N$ of $M$ such that $\mathrm{mag}_{R}N\leq n$ and $M/N$ is artinian.

$\mathrm{(2)}$ Define
\begin{center}
$\mathrm{g}^{\mathfrak{a}}_{n}(M):=\mathrm{inf}\{i\geq0\hspace{0.03cm}|\hspace{0.03cm}
\mathrm{H}^{\mathfrak{a}}_{i}(M)\notin FM_{\leq n}\}$,
\end{center}
and adopt the convention that the infimum of the empty set of integers is to be taken as $\infty$.
\end{df}

\begin{rem}\label{rem:2.1}\rm
$\mathrm{(1)}$ $M \in FM_{\leq -1}$ if and only if $M$ is artinian.

$\mathrm{(2)}$ Following \cite{Z1}, an $R$-module $M$ is called minimax, if there exists a finitely generated submodule $N$ of $M$ such that $M/N$ is artinian. Minimax modules are in  $FM_{\leq 0}$. In particular, if $M$ is noetherian, artinian or semi-discrete linearly compact, then $M \in FM_{\leq 0}$.

$\mathrm{(3)}$ An $R$-module $M$ satisfies the finite condition
for coassocisted primes if the set of coassocisted primes of any submodule of $M$ is finite (see \cite{N3}). In this case, $\mathrm{mag}_{R}M\leq n$ for some non-negative integer $n$, thus $M \in FM_{\leq n}$.

$\mathrm{(4)}$ Following \cite{N2}, an $R$-module $M$ is called CFA if there is a submodule
$N$ such that $\mathrm{Cosupp}_{R}N$ is a finite set and $M/N$ is artinian. Hence $\mathrm{mag}_{R}M\leq n$ for some non-negative integer $n$, that is to say, CFA modules are in $FM_{\leq n}$.

$\mathrm{(5)}$ Any $R$-module with magnitude strictly less than $n$ is in $FM_{\leq n}$.
\end{rem}

Now we provide some basic properties of the class $FM_{\leq n}$.

\begin{lem}\label{lem:2.2}
Let $n$ be an integer. If $L\in FM_{\leq n}$, then $(\mathrm{Coass}_{R}L)_{> n}$ is finite.
\end{lem}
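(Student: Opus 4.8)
The plan is to use the defining property of $FM_{\leq n}$: by hypothesis there is a submodule $N \subseteq L$ with $\mathrm{mag}_{R}N \leq n$ and $L/N$ artinian. The natural tool is the short exact sequence $0 \to N \to L \to L/N \to 0$, which I would feed into the standard behaviour of $\mathrm{Coass}_{R}$ under short exact sequences, namely the inclusion $\mathrm{Coass}_{R}L \subseteq \mathrm{Coass}_{R}N \cup \mathrm{Coass}_{R}(L/N)$ (this is the dual of the familiar fact for associated primes, recorded in Yassemi's work on coassociated primes). Intersecting with the set of primes $\mathfrak{p}$ satisfying $\dim R/\mathfrak{p} > n$ gives
\begin{center}
$(\mathrm{Coass}_{R}L)_{> n} \subseteq (\mathrm{Coass}_{R}N)_{> n} \cup (\mathrm{Coass}_{R}(L/N))_{> n}$.
\end{center}
It then suffices to check that each of the two sets on the right is finite.

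For the first set, the point is that $\mathrm{mag}_{R}N \leq n$ forces $(\mathrm{Coass}_{R}N)_{> n}$ to be \emph{empty}: by definition of magnitude, every $\mathfrak{p} \in \mathrm{Cosupp}_{R}N$ has $\dim R/\mathfrak{p} \leq \mathrm{mag}_{R}N \leq n$, and since $\mathrm{Coass}_{R}N \subseteq \mathrm{Cosupp}_{R}N$, no coassociated prime of $N$ can satisfy $\dim R/\mathfrak{p} > n$. For the second set, I would use that $L/N$ is artinian, hence $\mathrm{Coass}_{R}(L/N)$ is itself a finite set — an artinian module has only finitely many coassociated primes (indeed, the coassociated primes of an artinian module are all maximal, and an artinian module is a finite sum of modules cogenerated by a single $E(R/\mathfrak{m})$). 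So $(\mathrm{Coass}_{R}(L/N))_{> n}$ is finite, being a subset of a finite set. Combining the two observations, $(\mathrm{Coass}_{R}L)_{> n}$ is contained in a finite set and is therefore finite.

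I do not anticipate a serious obstacle here; the lemma is essentially an unwinding of the definition of $FM_{\leq n}$ together with two standard facts about coassociated primes. The only point requiring a little care is citing (or, if the paper prefers, briefly re-deriving) the short exact sequence inclusion $\mathrm{Coass}_{R}L \subseteq \mathrm{Coass}_{R}N \cup \mathrm{Coass}_{R}(L/N)$ and the finiteness of $\mathrm{Coass}_{R}$ for an artinian module, both of which trace back to Yassemi's papers \cite{Y1}, \cite{Y2} already in the bibliography. If one wants to avoid the artinian-module fact one could instead argue directly that an artinian module lies in $FM_{\leq -1}$ and hence has finite $\mathrm{Coass}$ by an induction on a composition-type filtration, but invoking the known statement is cleaner.
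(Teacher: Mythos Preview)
Your proposal is correct and follows essentially the same route as the paper: pick the defining submodule, apply the inclusion $\mathrm{Coass}_{R}L \subseteq \mathrm{Coass}_{R}N \cup \mathrm{Coass}_{R}(L/N)$ from the short exact sequence, observe that the first piece contributes nothing above level $n$ by the magnitude bound, and that the second piece is finite since $L/N$ is artinian. The paper's proof is exactly this, just more tersely stated.
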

\begin{proof}
Since $L\in FM_{\leq n}$, there is a submodule $L'$ of $L$ such that $\mathrm{mag}_{R}L'\leq n$ and $L/L'$ is artinian. Hence $(\mathrm{Coass}_{R}L')_{> n}= \emptyset$ and $(\mathrm{Coass}_{R}L/L')_{> n}$ is finite. Now from the exact sequence
$0\rightarrow L'\rightarrow L\rightarrow L/L'\rightarrow 0$,
we obtain
\begin{center}$\begin{aligned}
(\mathrm{Coass}_{R}L)_{> n}
&\subseteq (\mathrm{Coass}_{R}L')_{>n}\cup (\mathrm{Coass}_{R}L/L')_{> n}\\
&\subseteq (\mathrm{Coass}_{R}L/L')_{> n}.
\end{aligned}$\end{center}
Thus the set $(\mathrm{Coass}_{R}L)_{> n}$ is finite.
\end{proof}

\begin{lem}\label{lem:2.3}
For any integer $n$, the class $FM_{\leq n}$ is a Serre subcategory of the category of $R$-modules.
\end{lem}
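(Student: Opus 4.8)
The plan is to verify the three defining closure properties of a Serre subcategory directly from Definition \ref{df:2.0}(1): closure under submodules, under quotients, and under extensions. Throughout I will use the standard behaviour of magnitude with respect to short exact sequences, namely that for $0\to A\to B\to C\to 0$ one has $\mathrm{mag}_R B=\max\{\mathrm{mag}_R A,\mathrm{mag}_R C\}$ (this follows from the corresponding statement for cosupport, since $\mathrm{Cosupp}_R B=\mathrm{Cosupp}_R A\cup\mathrm{Cosupp}_R C$), together with the fact that the artinian $R$-modules form a Serre subcategory.

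First I would treat \emph{quotients}. Suppose $M\in FM_{\leq n}$ with witnessing submodule $N$ (so $\mathrm{mag}_R N\leq n$ and $M/N$ artinian), and let $M\onto M''$ be a surjection with kernel $K$. Take $N''$ to be the image of $N$ in $M''$; then $N''$ is a quotient of $N$, so $\mathrm{mag}_R N''\leq\mathrm{mag}_R N\leq n$, and $M''/N''$ is a quotient of $M/N$, hence artinian. Thus $M''\in FM_{\leq n}$. Next, \emph{submodules}: let $M'\subseteq M$ with $M\in FM_{\leq n}$ witnessed by $N$. Put $N'=M'\cap N$. Then $N'\subseteq N$ gives $\mathrm{mag}_R N'\leq n$, while $M'/N'=M'/(M'\cap N)\cong (M'+N)/N\subseteq M/N$ is a submodule of an artinian module, hence artinian. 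Therefore $M'\in FM_{\leq n}$.

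Finally, \emph{extensions}: let $0\to M'\to M\to M''\to 0$ be exact with $M',M''\in FM_{\leq n}$, witnessed by submodules $N'\subseteq M'$ and $N''\subseteq M''$ respectively. Let $\pi\colon M\to M''$ be the surjection and set $N=\pi^{-1}(N'')$, so that $N/M'\cong N''$ and $M/N\cong M''/N''$ is artinian. It remains to bound $\mathrm{mag}_R N$: the exact sequence $0\to M'\to N\to N''\to 0$ does not immediately help because $M'$ need not have small magnitude. Instead I would work inside $N$ with the submodule $N'\subseteq M'\subseteq N$. One has $\mathrm{mag}_R N'\leq n$, and $N/N'$ sits in an exact sequence $0\to M'/N'\to N/N'\to N'' \to 0$ in which $M'/N'$ is artinian and $N''\in FM_{\leq n}$; since $FM_{\leq n}$ is already known (from the submodule and quotient cases, or by a short direct argument) to contain artinian modules and to be closed under the extension of an $FM_{\leq n}$ module by an artinian one, $N/N'\in FM_{\leq n}$, say with witness $\bar{L}\subseteq N/N'$ of magnitude $\leq n$ and artinian quotient. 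Pulling $\bar L$ back to $L\subseteq N$, we get $N'\subseteq L$, $\mathrm{mag}_R L\leq\max\{\mathrm{mag}_R N',\mathrm{mag}_R \bar L\}\leq n$, and $N/L\cong (N/N')/\bar L$ artinian, so $N\in FM_{\leq n}$ with witness $L$; hence $M\in FM_{\leq n}$ with the composite witness $L\subseteq N\subseteq M$ (noting $M/L$ is an extension of the artinian $M/N$ by the artinian $N/L$, hence artinian).

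The only genuinely delicate point is the extension case: one cannot simply take the preimage of $N''$ as the witness for $M$, because its magnitude is controlled by that of $M'$, which may be as large as $n$ only after first passing to $N'$; the two-step argument above (first reduce $M$ by $N$ to an artinian quotient, handling the kernel $N$ via its own small submodule $N'$) is what circumvents this. The magnitude additivity in short exact sequences is the one external input, and it is standard.
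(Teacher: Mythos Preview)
Your proof is correct, but the route differs from the paper's. The paper does not verify the three closure properties by hand; instead it observes that the class of modules with $\mathrm{mag}_R(-)\leq n$ is a Serre subcategory, that the class of artinian modules is a Serre subcategory closed under injective hulls, and then invokes Yoshizawa's result \cite[Corollary~3.5]{Y4}, which says in general that for Serre subcategories $\mathcal{S}_1,\mathcal{S}_2$ with $\mathcal{S}_2$ closed under injective hulls, the class $\{M:\exists\,N\subseteq M,\ N\in\mathcal{S}_1,\ M/N\in\mathcal{S}_2\}$ is again Serre. This dispatches the lemma in two lines.

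What each approach buys: the paper's argument is shorter and more conceptual, isolating exactly the structural ingredient (closure of artinians under injective hulls) that makes the extension step go through, at the cost of an external citation. Your argument is self-contained and elementary, relying only on the additivity of cosupport/magnitude in short exact sequences; the submodule and quotient cases are clean, but your extension case is more laboured than necessary. In fact, once you set $N=\pi^{-1}(N'')$ and note the exact sequence $0\to M'\to N\to N''\to 0$, for $n\geq 0$ you immediately get $\mathrm{mag}_R N\leq\max\{\mathrm{mag}_R M',n\}\leq\max\{n,0\}=n$ (since $\mathrm{mag}_R M'\leq\max\{\mathrm{mag}_R N',\mathrm{mag}_R(M'/N')\}\leq\max\{n,0\}$, artinian modules having magnitude $\leq 0$), so $L=N$ already works; and for $n\leq -1$ the class $FM_{\leq n}$ is exactly the artinian modules, where extension closure is immediate. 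The two-step pull-back through $N'$ and the appeal to an auxiliary ``extension by an artinian'' case can thus be dropped.
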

\begin{proof}
First, we see that the class of $R$-modules with magnitude strictly less than $n$ is a Serre subcategory of the category of $R$-modules. On the other hand, the class
of artinian $R$-modules is a Serre subcategory which is closed under injective
hulls. It follows from \cite[Corollary 3.5]{Y4} that $FM_{\leq n}$ is a Serre subcategory of the category of $R$-modules.
\end{proof}

\begin{lem}\label{lem:2.4} Let $n$ be an integer, $N$ a finitely generated $R$-module and $M\in FM_{\leq n}$. Then $\mathrm{Ext}_{R}^{i}(N,M)$ and $\mathrm{Tor}_{i}^{R}(N,M)$ are in $FM_{\leq n}$ for all $i\geq 0$.
\end{lem}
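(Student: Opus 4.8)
The plan is to reduce to a free resolution (for the Tor statement) and an injective-style argument (for the Ext statement), using the fact from Lemma \ref{lem:2.3} that $FM_{\leq n}$ is a Serre subcategory, so that it is closed under finite direct sums, subobjects, quotients and extensions. For a finitely generated $R$-module $N$ over the noetherian ring $R$, choose a free resolution $\cdots \to F_1 \to F_0 \to N \to 0$ with each $F_i$ a \emph{finitely generated} free $R$-module, say $F_i \cong R^{b_i}$. Then $\mathrm{Tor}_i^R(N,M) = \HH_i(F_\bullet \otimes_R M)$ is a subquotient of $F_i \otimes_R M \cong M^{b_i}$. Since $M \in FM_{\leq n}$ and $FM_{\leq n}$ is a Serre subcategory, $M^{b_i} \in FM_{\leq n}$, and hence every subquotient lies in $FM_{\leq n}$; in particular $\mathrm{Tor}_i^R(N,M) \in FM_{\leq n}$ for all $i \geq 0$.

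For the $\mathrm{Ext}$ statement the same free resolution of $N$ gives $\mathrm{Ext}_R^i(N,M) = \HH^i(\Hom_R(F_\bullet, M))$, and $\Hom_R(F_i, M) \cong \Hom_R(R^{b_i}, M) \cong M^{b_i} \in FM_{\leq n}$; again $\mathrm{Ext}_R^i(N,M)$, being a subquotient of $M^{b_i}$, lies in the Serre subcategory $FM_{\leq n}$. This settles both assertions simultaneously and uses nothing beyond the finiteness of the Betti numbers $b_i$ (guaranteed by $R$ noetherian and $N$ finitely generated) together with Lemma \ref{lem:2.3}.

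The only point that needs a word of care is the appeal to finitely generated free resolutions: this is exactly where noetherianness of $R$ enters, and it is what allows us to replace the potentially huge modules $\Hom_R(F,M)$ and $F\otimes_R M$ for arbitrary free $F$ by the finite powers $M^{b_i}$, on which Lemma \ref{lem:2.3} applies directly. There is no genuine obstacle here; the statement is essentially the standard fact that any Serre subcategory closed under finite direct sums is automatically closed under $\mathrm{Ext}^i(N,-)$ and $\mathrm{Tor}_i(N,-)$ for finitely generated $N$. I would present it in two short paragraphs, doing the Tor case in detail and remarking that the Ext case is identical with $\otimes$ replaced by $\Hom$.

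\begin{proof}
Since $R$ is noetherian and $N$ is finitely generated, $N$ admits a free resolution
\begin{center}
$\cdots \longrightarrow F_{2} \longrightarrow F_{1} \longrightarrow F_{0} \longrightarrow N \longrightarrow 0$
\end{center}
in which each $F_{i}$ is a finitely generated free $R$-module, say $F_{i}\cong R^{b_{i}}$ with $b_{i}\in \nn$. By Lemma \ref{lem:2.3}, $FM_{\leq n}$ is a Serre subcategory of the category of $R$-modules; in particular it is closed under finite direct sums, so $M^{b_{i}}\in FM_{\leq n}$, and it is closed under passing to submodules and quotients.

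For $\mathrm{Tor}$, the module $\mathrm{Tor}_{i}^{R}(N,M)$ is computed as the $i$th homology of the complex $F_{\bullet}\otimes_{R}M$, whose $i$th term is $F_{i}\otimes_{R}M\cong M^{b_{i}}\in FM_{\leq n}$. Thus $\mathrm{Tor}_{i}^{R}(N,M)$ is a subquotient of a module in $FM_{\leq n}$, hence lies in $FM_{\leq n}$ for every $i\geq 0$. For $\mathrm{Ext}$, the module $\mathrm{Ext}_{R}^{i}(N,M)$ is the $i$th cohomology of $\Hom_{R}(F_{\bullet},M)$, whose $i$th term is $\Hom_{R}(F_{i},M)\cong \Hom_{R}(R^{b_{i}},M)\cong M^{b_{i}}\in FM_{\leq n}$. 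Again $\mathrm{Ext}_{R}^{i}(N,M)$ is a subquotient of a module in $FM_{\leq n}$, so it belongs to $FM_{\leq n}$ for every $i\geq 0$.
\end{proof}
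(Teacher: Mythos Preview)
Your proof is correct and follows essentially the same approach as the paper: take a free resolution of $N$ by finitely generated free modules, note that each term of the resulting complex (whether $F_\bullet\otimes_R M$ or $\Hom_R(F_\bullet,M)$) is a finite direct sum of copies of $M$, and conclude via Lemma \ref{lem:2.3} that the (co)homology, being a subquotient, lies in $FM_{\leq n}$. The paper proves only the Tor case in detail and declares the Ext case similar, exactly as you do.
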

\begin{proof}
We only prove the claim for Tor modules, and the proof for
Ext modules is similar. Since $N$ is finitely
generated, it follows that $N$ possesses a free resolution
\begin{center}
$\mathbf{F}:\cdots\rightarrow F_{s}\rightarrow F_{s-1}\rightarrow \cdots\rightarrow F_{1}\rightarrow F_{0}\rightarrow 0$,
\end{center}
where $F_{i}$ is finitely generated free for $i\geq 0$. Thus $\mathrm{Tor}_{i}^{R}(N,M)=\mathrm{H}_{i}(\mathbf{F}\otimes_{R}M)$ is a subquotient of a direct sum of finitely many copies of $M$. The assertion follows from Lemma \ref{lem:2.3}.
\end{proof}

\begin{lem}\label{lem:2.5} Let $n$ be an integer, $N$ a finitely generated $R$-module and $M$ an arbitrary $R$-module. Suppose that $t$ is a non-negative integer such that $\mathrm{Tor}_{i}^{R}(N,M)\in FM_{\leq n}$ for $i\leq t$. Then for any finitely generated $R$-module $L$ with $\mathrm{Supp}_{R}L\subseteq \mathrm{Supp}_{R}N$, $\mathrm{Tor}_{i}^{R}(L,M)\in FM_{\leq n}$ for $i\leq t$.
\end{lem}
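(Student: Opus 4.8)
The plan is to invoke Gruson's filtration theorem to reduce to the case in which $L$ is a homomorphic image of a finite direct sum of copies of $N$, and then to argue by induction on $t$, using throughout that $FM_{\leq n}$ is a Serre subcategory (Lemma \ref{lem:2.3}) and the long exact sequence of $\mathrm{Tor}_{\bullet}^{R}(-,M)$.

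For the reduction, recall that since $R$ is noetherian and $L,N$ are finitely generated with $\mathrm{Supp}_{R}L\subseteq \mathrm{Supp}_{R}N$, Gruson's theorem produces a finite filtration $0=L_{0}\subseteq L_{1}\subseteq \cdots \subseteq L_{r}=L$ in which every quotient $L_{j}/L_{j-1}$ is a homomorphic image of a finite direct sum of copies of $N$. Feeding the short exact sequences $0\to L_{j-1}\to L_{j}\to L_{j}/L_{j-1}\to 0$ into the long exact sequence of $\mathrm{Tor}$ and using that $FM_{\leq n}$ is closed under submodules, quotients and extensions, an induction on $r$ shows that the full statement follows once it is known in the special case where $L$ itself is a homomorphic image of some $N^{\oplus k}$; note that this part of the argument never changes the homological degree, so it respects the range $i\leq t$. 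In that special case we have an exact sequence $0\to K\to N^{\oplus k}\to L\to 0$ with $K$ finitely generated (noetherianness) and $\mathrm{Supp}_{R}K\subseteq \mathrm{Supp}_{R}N^{\oplus k}=\mathrm{Supp}_{R}N$.

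Now induct on $t$. If $t=0$, right exactness of $-\otimes_{R}M$ makes $L\otimes_{R}M$ a quotient of $N^{\oplus k}\otimes_{R}M\cong(N\otimes_{R}M)^{\oplus k}$, which lies in $FM_{\leq n}$ by Lemma \ref{lem:2.3}, and hence so does $L\otimes_{R}M$. If $t\geq 1$, the hypothesis in particular gives $\mathrm{Tor}_{i}^{R}(N,M)\in FM_{\leq n}$ for $i\leq t-1$, so the inductive hypothesis (the full lemma at level $t-1$, applied to $K$ with the same $N$ and $M$) yields $\mathrm{Tor}_{i}^{R}(K,M)\in FM_{\leq n}$ for $i\leq t-1$. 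For $1\leq i\leq t$, the exact sequence
\[
\mathrm{Tor}_{i}^{R}(N^{\oplus k},M)\longrightarrow \mathrm{Tor}_{i}^{R}(L,M)\longrightarrow \mathrm{Tor}_{i-1}^{R}(K,M)
\]
has left term $(\mathrm{Tor}_{i}^{R}(N,M))^{\oplus k}\in FM_{\leq n}$ and right term in $FM_{\leq n}$ (since $i-1\leq t-1$); as $FM_{\leq n}$ is a Serre subcategory, $\mathrm{Tor}_{i}^{R}(L,M)$, being an extension of a submodule of $\mathrm{Tor}_{i-1}^{R}(K,M)$ by a quotient of $\mathrm{Tor}_{i}^{R}(N^{\oplus k},M)$, lies in $FM_{\leq n}$. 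The degree $i=0$ is exactly the base case, so the induction closes, and combining with the reduction step gives the lemma.

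The only non-formal ingredient is Gruson's theorem, which is precisely what upgrades the set-theoretic inclusion $\mathrm{Supp}_{R}L\subseteq \mathrm{Supp}_{R}N$ to the structural fact that $L$ is assembled from copies of $N$; everything else is bookkeeping. The delicate point in that bookkeeping is the order of the inductions: the reduction via the Gruson filtration involves no shift in homological degree and can therefore be carried out at a fixed $t$, whereas the single degree shift --- the one in $0\to K\to N^{\oplus k}\to L\to 0$ --- must be absorbed by passing from $t$ to $t-1$, which forces the induction on $t$ to sit outside everything else.
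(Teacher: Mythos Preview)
Your proof is correct and follows essentially the same route as the paper's: Gruson's filtration to reduce to a single quotient of (copies of) $N$, then induction on $t$ via the long exact $\mathrm{Tor}$ sequence, using throughout that $FM_{\leq n}$ is Serre. The only cosmetic difference is that the paper's cited form of Gruson's theorem produces quotients that are homomorphic images of $N$ itself rather than of $N^{\oplus k}$, so the paper works with $0\to K\to N\to L\to 0$ instead of $0\to K\to N^{\oplus k}\to L\to 0$; since $FM_{\leq n}$ is closed under finite direct sums this changes nothing.
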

\begin{proof}
Since $\mathrm{Supp}_{R}L\subseteq \mathrm{Supp}_{R}N$, it follows from the Gruson's Theorem (cf. \cite[Lemma 2.2]{B}) that there exists a finite filtration
\begin{center}
$0=L_{0}\subset L_{1}\subset \cdots\subset L_{k}=L$,
\end{center}
such that the factors $L_{j}/L_{j-1}$ are homomorphic image of $N$ for $1\leq j\leq k$. Now consider the exact sequences
$$\xymatrix@C=10pt@R=5pt{0 \ar[r] & K \ar[r] & N \ar[r] & L_{1} \ar[r] & 0,\\
0 \ar[r] & L_{1} \ar[r] & L_{2} \ar[r] & L_{2}/L_{1} \ar[r] & 0,\\
         &              &            &     \vdots            & \\
0 \ar[r] & L_{k-1} \ar[r] & L_{k} \ar[r] & L_{k}/L_{k-1} \ar[r] & 0. }$$
From the long exact sequence
$$\cdots\rightarrow \mathrm{Tor}_{i}^{R}(L_{j-1},M)\rightarrow \mathrm{Tor}_{i}^{R}(L_{j},M)\rightarrow \mathrm{Tor}_{i}^{R}(L_{j}/L_{j-1},M)\rightarrow \mathrm{Tor}_{i-1}^{R}(L_{j}/L_{j-1},M)\rightarrow \cdots,$$
and induction on $k$, it suffices to prove the case when $k=1$. Thus there is an exact sequence
$$0\rightarrow K\rightarrow N\rightarrow L\rightarrow 0 \eqno(*)$$
for some finitely generated $R$-module $K$. Now, using induction on $t$. If $t=0$, then $L\otimes_{R}M$ is a quotient module of $N\otimes_{R}M$, in view of assumption and Lemma \ref{lem:2.3}, $L\otimes_{R}M\in FM_{\leq n}$. Assume that $t> 0$ and $\mathrm{Tor}_{j}^{R}(L',M)\in FM_{\leq n}$ for every finitely generated $R$-module $L'$ with $\mathrm{Supp}_{R}L'\subseteq \mathrm{Supp}_{R}M$ and $j\leq t-1$. The exact sequence $(*)$ induces the long exact sequence
$$\cdots\rightarrow \mathrm{Tor}_{i}^{R}(N,M)\rightarrow \mathrm{Tor}_{i}^{R}(L,M)\rightarrow \mathrm{Tor}_{i-1}^{R}(K,M)\rightarrow \cdots.$$
By assumption and the inductive hyphothesis, $\mathrm{Tor}_{i}^{R}(N,M)$ and $\mathrm{Tor}_{i-1}^{R}(K,M)$ are in $FM_{\leq n}$ for $i\leq t$. It follows from Lemma \ref{lem:2.3} that $\mathrm{Tor}_{i}^{R}(L,M)\in FM_{\leq n}$ for $i\leq t$.
\end{proof}

Nam \cite{N1} posed a question on local homology: when is the set of coassociated primes of local homology modules finite? The following theorem is a partial answer to this question.

\begin{thm}\label{thm:2.10}
Let $M$ be a semi-discrete linearly compact $R$-module and $t=\mathrm{g}^{\mathfrak{a}}_{n}(M)$. For an integer $n$, the following statements hold:

$\mathrm{(1)}$ $\mathrm{Tor}_{j}^{R}(R/\mathfrak{a},\mathrm{H}_{i}^{\mathfrak{a}}(M))\in FM_{\leq n}$ for $i=0,1,\ldots,t-1$ and $j\geq 0$.

$\mathrm{(2)}$ $R/\mathfrak{a}\otimes_{R}\mathrm{H}_{t}^{\mathfrak{a}}(M)$ and $\mathrm{Tor}_{1}^{R}(R/\mathfrak{a},\mathrm{H}_{t}^{\mathfrak{a}}(M))\in FM_{\leq n}$.

$\mathrm{(3)}$ For each finitely generated $R$-module $N$ with $\mathrm{Supp}_{R}N\subseteq \mathrm{V}(\mathfrak{a})$, $\mathrm{Tor}_{j}^{R}(N,\mathrm{H}_{i}^{\mathfrak{a}}(M))\in FM_{\leq n}$ for $i=0,1,\ldots,t-1$ and $j\geq 0$.

$\mathrm{(4)}$ The set $(\mathrm{Coass}_{R}\mathrm{H}_{t}^{\mathfrak{a}}(M))_{> n}$ is finite.
\end{thm}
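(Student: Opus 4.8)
The plan is to prove the four statements in order, using the spectral sequence machinery of Lemma \ref{lem:2.7} (with the Serre subcategory $\mathcal{S} = FM_{\leq n}$, which is legitimate by Lemma \ref{lem:2.3}) as the engine for (1) and (2), then promoting (1) to (3) via Gruson's theorem (Lemma \ref{lem:2.5}), and finally extracting the finiteness statement (4) from (2) together with Lemma \ref{lem:2.2}.

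For (1), I would induct on $i$. The base case $i=0$: since $M$ is semi-discrete linearly compact, $\mathrm{H}_0^{\mathfrak{a}}(M) = \Lambda_{\mathfrak{a}}(M)$ is again semi-discrete linearly compact, hence lies in $FM_{\leq 0} \subseteq FM_{\leq n}$ by Remark \ref{rem:2.1}(2); then $\mathrm{Tor}_j^R(R/\mathfrak{a}, \mathrm{H}_0^{\mathfrak{a}}(M)) \in FM_{\leq n}$ for all $j$ by Lemma \ref{lem:2.4}. For the inductive step, suppose $i < t = \mathrm{g}_n^{\mathfrak{a}}(M)$ and that $\mathrm{Tor}_j^R(R/\mathfrak{a}, \mathrm{H}_{i'}^{\mathfrak{a}}(M)) \in FM_{\leq n}$ for all $i' < i$ and all $j$. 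By definition of $t$, each $\mathrm{H}_{i'}^{\mathfrak{a}}(M) \in FM_{\leq n}$ for $i' \leq i$; in particular $\mathrm{H}_i^{\mathfrak{a}}(M) \in FM_{\leq n}$, so $\mathrm{Tor}_j^R(R/\mathfrak{a}, \mathrm{H}_i^{\mathfrak{a}}(M)) \in FM_{\leq n}$ for all $j$ by Lemma \ref{lem:2.4} again. (Indeed, once we know $\mathrm{H}_i^{\mathfrak{a}}(M) \in FM_{\leq n}$ directly for $i < t$, statement (1) is immediate from Lemma \ref{lem:2.4} and does not even need the spectral sequence — the spectral sequence is reserved for the boundary degree $t$.)

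Statement (2) is where the spectral sequence does real work. Having established (1), the hypotheses of Lemma \ref{lem:2.7} are met with $s = t$: we have $\mathrm{Tor}_j^R(R/\mathfrak{a}, \mathrm{H}_i^{\mathfrak{a}}(M)) \in FM_{\leq n}$ for $i < t$ and $j \geq 0$, and we also need $\mathrm{Tor}_t^R(R/\mathfrak{a}, M)$ and $\mathrm{Tor}_{t+1}^R(R/\mathfrak{a}, M)$ in $FM_{\leq n}$ — this holds because $M$ is semi-discrete linearly compact, hence $M \in FM_{\leq 0} \subseteq FM_{\leq n}$, so Lemma \ref{lem:2.4} applies. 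Then Lemma \ref{lem:2.7}(1) gives $R/\mathfrak{a} \otimes_R \mathrm{H}_t^{\mathfrak{a}}(M) \in FM_{\leq n}$ and Lemma \ref{lem:2.7}(2) gives $\mathrm{Tor}_1^R(R/\mathfrak{a}, \mathrm{H}_t^{\mathfrak{a}}(M)) \in FM_{\leq n}$. For (3), I would first reduce from $R/\mathfrak{a}$ to an arbitrary finitely generated $N$ with $\mathrm{Supp}_R N \subseteq \mathrm{V}(\mathfrak{a}) = \mathrm{Supp}_R(R/\mathfrak{a})$ by invoking Lemma \ref{lem:2.5}, applied to $M' = \mathrm{H}_i^{\mathfrak{a}}(M)$ for each fixed $i < t$: statement (1) says $\mathrm{Tor}_j^R(R/\mathfrak{a}, M') \in FM_{\leq n}$ for all $j$, so Lemma \ref{lem:2.5} (with $t$ arbitrarily large) yields $\mathrm{Tor}_j^R(N, M') \in FM_{\leq n}$ for all $j$.

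Finally (4): by (2), $R/\mathfrak{a} \otimes_R \mathrm{H}_t^{\mathfrak{a}}(M) \in FM_{\leq n}$, so Lemma \ref{lem:2.2} gives that $(\mathrm{Coass}_R(R/\mathfrak{a} \otimes_R \mathrm{H}_t^{\mathfrak{a}}(M)))_{>n}$ is finite. It remains to relate $\mathrm{Coass}_R \mathrm{H}_t^{\mathfrak{a}}(M)$ to $\mathrm{Coass}_R(R/\mathfrak{a} \otimes_R \mathrm{H}_t^{\mathfrak{a}}(M))$: I expect this is where one uses that $\mathfrak{a}$ acts "locally nilpotently in the relevant corange", i.e. for the primes $\mathfrak{p} \in \mathrm{Coass}_R \mathrm{H}_t^{\mathfrak{a}}(M)$ with $\dim R/\mathfrak{p} > n$ one has $\mathfrak{p} \supseteq \mathfrak{a}$ (since $\mathrm{Cosupp}_R \mathrm{H}_t^{\mathfrak{a}}(M) \subseteq \mathrm{V}(\mathfrak{a})$ for a linearly compact module, as local homology of an $\mathfrak{a}$-complete-flavored module is supported in $\mathrm{V}(\mathfrak{a})$), and then the standard fact $\mathrm{Coass}_R(R/\mathfrak{a} \otimes_R L) = \mathrm{Coass}_R L \cap \mathrm{V}(\mathfrak{a})$ (or at least the containment $\supseteq$, cf. Yassemi's results) forces $(\mathrm{Coass}_R \mathrm{H}_t^{\mathfrak{a}}(M))_{>n} \subseteq (\mathrm{Coass}_R(R/\mathfrak{a}\otimes_R \mathrm{H}_t^{\mathfrak{a}}(M)))_{>n}$, which is finite. \textbf{The main obstacle} I anticipate is precisely this last comparison of coassociated primes: making sure that passing to $R/\mathfrak{a} \otimes_R -$ does not lose any coassociated prime $\mathfrak{p}$ with $\dim R/\mathfrak{p} > n$ — this needs either $\mathfrak{a} \subseteq \mathfrak{p}$ for all such $\mathfrak{p}$ (via the cosupport-in-$\mathrm{V}(\mathfrak{a})$ property of local homology of linearly compact modules) or a Nakayama-type argument showing $\mathfrak{a}^k \mathrm{H}_t^{\mathfrak{a}}(M)$ has small magnitude for suitable $k$; everything else is a bookkeeping application of Lemmas \ref{lem:2.2}--\ref{lem:2.5} and \ref{lem:2.7}.
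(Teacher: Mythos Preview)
Your proposal is correct and follows essentially the same route as the paper: (1) is immediate from the definition of $t$ plus Lemma \ref{lem:2.4} (your induction detour is unnecessary, as you yourself note), (2) uses $M\in FM_{\leq n}$ and Lemma \ref{lem:2.7}, (3) is Lemma \ref{lem:2.5} applied to (1), and (4) combines (2) with Lemma \ref{lem:2.2}. The ``obstacle'' you flag in (4) is not one: the paper simply records the equality $\mathrm{Coass}_{R}(R/\mathfrak{a}\otimes_{R}\mathrm{H}_{t}^{\mathfrak{a}}(M))=\mathrm{Coass}_{R}\mathrm{H}_{t}^{\mathfrak{a}}(M)$, which is exactly your argument---Yassemi's identity $\mathrm{Coass}_{R}(R/\mathfrak{a}\otimes_{R}L)=\mathrm{V}(\mathfrak{a})\cap\mathrm{Coass}_{R}L$ together with $\mathrm{Cosupp}_{R}\mathrm{H}_{t}^{\mathfrak{a}}(M)\subseteq\mathrm{V}(\mathfrak{a})$---so no Nakayama-type detour is needed.
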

\begin{proof}
$\mathrm{(1)}$ Since $t=\mathrm{g}^{\mathfrak{a}}_{n}(M)$, it yields that $\mathrm{H}_{i}^{\mathfrak{a}}(M)\in FM_{\leq n}$ for $i=0,1,\ldots,t-1$. The result follows immediately from Lemma \ref{lem:2.4}.

$\mathrm{(2)}$ We have $M\in FM_{\leq n}$ because $M$ is a semi-discrete linearly compact $R$-module. Then $\mathrm{Tor}_{t}^{R}(R/\mathfrak{a},M)$ and $\mathrm{Tor}_{t+1}^{R}(R/\mathfrak{a},M)\in FM_{\leq n}$. Using Lemma \ref{lem:2.7} and part $\mathrm{(1)}$, the assertion holds true.

$\mathrm{(3)}$ It follows from Lemma \ref{lem:2.5} and part $\mathrm{(1)}$.

$\mathrm{(4)}$ Note $\mathrm{Coass}_{R}(R/\mathfrak{a}\otimes_{R}\mathrm{H}_{t}^{\mathfrak{a}}(M))
=\mathrm{Coass}_{R}\mathrm{H}_{t}^{\mathfrak{a}}(M)$. The assertion follows from Lemma \ref{lem:2.2} and part $\mathrm{(2)}$.
\end{proof}

The following corollary provide some conditions such that the local-global principle for the artinianness dimension holds.

\begin{cor}\label{cor:3.8}
Let $M$ be a linearly compact $R$-module such that $\mathrm{Tor}_{i}^{R}(R/\mathfrak{a},M)$ is artinian for every integer $i$. Then
\begin{center}
$r_{\mathfrak{a}}(M)=
\mathrm{inf}\{r_{\mathfrak{a}R_{\mathfrak{p}}}(_{\mathfrak{p}}M) \hspace{0.03cm}|\hspace{0.03cm}\mathfrak{p}\in \mathrm{Spec}R\},$
\end{center}
if one of the following conditions is satisfied:

$\mathrm{(1)}$ $M$ is semi-discrete linearly compact $R$-modue;

$\mathrm{(2)}$ $M$ and $\mathrm{H}_{i}^{\mathfrak{a}}(M)$ satisfy the finite condition for coassocisted primes for all $i< r_{\mathfrak{a}}(M)$;

$\mathrm{(3)}$ $M$ and $\mathrm{H}_{i}^{\mathfrak{a}}(M)$ are CFA for all $i<r_{\mathfrak{a}}(M)$;
\end{cor}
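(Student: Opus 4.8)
The plan is to reduce all three cases to Theorem \ref{thm:3.6} by verifying, in each case, that the hypothesis ``$\mathrm{Coass}_{R}\mathrm{H}_{r_{\mathfrak{a}}(M)}^{\mathfrak{a}}(M)$ is finite'' holds, and that $\mathrm{Tor}_{i}^{R}(R/\mathfrak{a},M)$ being artinian for all $i$ is already assumed. Throughout I will write $s=r_{\mathfrak{a}}(M)$, so that $\mathrm{H}_{i}^{\mathfrak{a}}(M)$ is artinian for $i<s$ and $\mathrm{H}_{s}^{\mathfrak{a}}(M)$ is not artinian. In the notation of Section 3, artinian modules are exactly those in $FM_{\leq-1}$ (Remark \ref{rem:2.1}(1)), so for $n=-1$ we have $\mathrm{g}^{\mathfrak{a}}_{-1}(M)=s$ whenever $M\in FM_{\leq-1}$; but in general $M$ need not be artinian, so I will instead run the $FM_{\leq n}$ machinery at a suitable $n\geq0$ and combine it with Lemma \ref{lem:3.5} at $n=0$.

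For case (1), $M$ is semi-discrete linearly compact, hence $M\in FM_{\leq0}$ by Remark \ref{rem:2.1}(2). Apply Theorem \ref{thm:2.10} with $n=0$ and $t=\mathrm{g}^{\mathfrak{a}}_{0}(M)$. Since $\mathrm{H}_{i}^{\mathfrak{a}}(M)$ is artinian, hence in $FM_{\leq0}$, for $i<s$, we get $s\leq t$. By Theorem \ref{thm:2.10}(4) the set $(\mathrm{Coass}_{R}\mathrm{H}_{t}^{\mathfrak{a}}(M))_{>0}$ is finite. If $t=s$ this gives that $(\mathrm{Coass}_{R}\mathrm{H}_{s}^{\mathfrak{a}}(M))_{>0}$ is finite, and since the primes $\mathfrak{p}$ with $\dim R/\mathfrak{p}=0$ are the maximal ideals and any artinian-by-``small'' obstruction forces only finitely many of those to occur in the coassociated set as well — more precisely, since $\mathrm{Coass}_{R}\mathrm{H}_{s}^{\mathfrak{a}}(M)=\mathrm{Coass}_{R}(R/\mathfrak{a}\otimes_{R}\mathrm{H}_{s}^{\mathfrak{a}}(M))$ and $R/\mathfrak{a}\otimes_{R}\mathrm{H}_{s}^{\mathfrak{a}}(M)\in FM_{\leq0}$ by Theorem \ref{thm:2.10}(2), Lemma \ref{lem:2.2} applied with $n=0$ shows $(\mathrm{Coass}_{R}\mathrm{H}_{s}^{\mathfrak{a}}(M))_{>0}$ is finite, and the finiteness of the full set then follows because an $FM_{\leq0}$ module has only finitely many coassociated primes of dimension $0$ too (the artinian quotient contributes finitely many maximal ideals, the minimax/finite-magnitude submodule contributes finitely many via its finitely generated part). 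I expect the cleanest route is actually to bypass proving $t=s$: run Lemma \ref{lem:3.5} at $n=0$, using that $r_{\mathfrak{a}}^{\mathfrak{a}}(M)_{0}=r_{\mathfrak{a}}(M)=s$, so one only needs $(\mathrm{Coass}_{R}\mathrm{H}_{s}^{\mathfrak{a}}(M))_{\geq0}=\mathrm{Coass}_{R}\mathrm{H}_{s}^{\mathfrak{a}}(M)$ finite, which is precisely what the $FM_{\leq0}$-argument just sketched delivers; then Theorem \ref{thm:3.6} (with $\mathfrak{b}=\mathfrak{a}$) finishes.

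For cases (2) and (3): in case (2), by Remark \ref{rem:2.1}(3), a module satisfying the finite condition for coassociated primes has $\mathrm{mag}_{R}(-)\leq n$ for some $n$, hence lies in $FM_{\leq n}$; in case (3), by Remark \ref{rem:2.1}(4), a CFA module likewise lies in $FM_{\leq n}$ for some $n$. So pick $n_{0}$ large enough that $M$ and $\mathrm{H}_{i}^{\mathfrak{a}}(M)$ for $i<s$ all lie in $FM_{\leq n_{0}}$ (a finite set of modules, so a single $n_{0}$ works); then $\mathrm{g}^{\mathfrak{a}}_{n_{0}}(M)\geq s$, and Theorem \ref{thm:2.10}(4) with $n=n_{0}$ — or more directly Theorem \ref{thm:2.10}(2) together with Lemma \ref{lem:2.2} — shows $(\mathrm{Coass}_{R}\mathrm{H}_{s}^{\mathfrak{a}}(M))_{>n_{0}}$ is finite. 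But in these two cases we moreover know $\mathrm{H}_{s}^{\mathfrak{a}}(M)$ itself satisfies the finite condition (case 2) or is CFA (case 3), so $\mathrm{Coass}_{R}\mathrm{H}_{s}^{\mathfrak{a}}(M)$ is finite outright; here we must be slightly careful in case (3) because CFA a priori only bounds $\mathrm{Cosupp}$ of a submodule — but a CFA module $L$ has a submodule $N$ with $\mathrm{Cosupp}_{R}N$ finite and $L/N$ artinian, so $\mathrm{Coass}_{R}N\subseteq\mathrm{Cosupp}_{R}N$ is finite and $\mathrm{Coass}_{R}(L/N)$ is finite, whence $\mathrm{Coass}_{R}L$ is finite. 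With $\mathrm{Coass}_{R}\mathrm{H}_{r_{\mathfrak{a}}(M)}^{\mathfrak{a}}(M)$ finite in hand, invoke Theorem \ref{thm:3.6} with $\mathfrak{b}=\mathfrak{a}$.

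The main obstacle is case (1), where $M$ is only guaranteed to be in $FM_{\leq0}$ and not to satisfy any a priori finiteness of coassociated primes of $\mathrm{H}_{s}^{\mathfrak{a}}(M)$ itself; the delicate point is promoting the finiteness of $(\mathrm{Coass}_{R}\mathrm{H}_{s}^{\mathfrak{a}}(M))_{>0}$ coming from Theorem \ref{thm:2.10}(4) to finiteness of the whole set (the dimension-$0$ primes). The resolution is that $R/\mathfrak{a}\otimes_{R}\mathrm{H}_{s}^{\mathfrak{a}}(M)\in FM_{\leq0}$ by Theorem \ref{thm:2.10}(2): write it as an extension of an artinian module by a minimax (finite-magnitude) module; the artinian quotient has finitely many coassociated primes, and a minimax module, having a finitely generated submodule with artinian quotient, also has finitely many coassociated primes. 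Since $\mathrm{Coass}$ of an extension is contained in the union, $\mathrm{Coass}_{R}(R/\mathfrak{a}\otimes_{R}\mathrm{H}_{s}^{\mathfrak{a}}(M))=\mathrm{Coass}_{R}\mathrm{H}_{s}^{\mathfrak{a}}(M)$ is finite, and Theorem \ref{thm:3.6} applies. All other steps are routine once the correct $n$ is chosen and the $FM_{\leq n}$-formalism of Section 3 is quoted.
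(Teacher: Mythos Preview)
Your overall strategy---reduce to Theorem~\ref{thm:3.6} by showing $\mathrm{Coass}_{R}\mathrm{H}_{s}^{\mathfrak{a}}(M)$ is finite, where $s=r_{\mathfrak{a}}(M)$---is exactly the paper's. But the execution has two genuine gaps.

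\textbf{Case (1).} You work at $n=0$ and eventually assert that any module in $FM_{\leq 0}$ has finite $\mathrm{Coass}$, arguing via the decomposition ``artinian quotient by a minimax (finite-magnitude) submodule''. This conflates two different classes: the submodule $N$ in the $FM_{\leq 0}$-decomposition satisfies $\mathrm{mag}_{R}N\leq 0$, which only says every prime in $\mathrm{Cosupp}_{R}N$ is maximal---it does \emph{not} force $N$ to be minimax, nor does it bound the number of such maximal primes. (Remark~\ref{rem:2.1}(2) gives only the inclusion minimax $\subseteq FM_{\leq 0}$, not equality.) So the step ``$FM_{\leq 0}\Rightarrow$ finite Coass'' is unjustified. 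The paper instead applies Theorem~\ref{thm:2.10}(4) at $n=-1$: since $FM_{\leq -1}$ is exactly the class of artinian modules, one has $\mathrm{g}^{\mathfrak{a}}_{-1}(M)=r_{\mathfrak{a}}(M)$, and $(\mathrm{Coass}_{R}\mathrm{H}_{s}^{\mathfrak{a}}(M))_{>-1}$ is the \emph{entire} set $\mathrm{Coass}_{R}\mathrm{H}_{s}^{\mathfrak{a}}(M)$. The hypothesis that every $\mathrm{Tor}_{i}^{R}(R/\mathfrak{a},M)$ is artinian supplies what the proof of Theorem~\ref{thm:2.10}(2) needs at $n=-1$ (namely $\mathrm{Tor}_{t}^{R}(R/\mathfrak{a},M),\mathrm{Tor}_{t+1}^{R}(R/\mathfrak{a},M)\in FM_{\leq -1}$), so Lemma~\ref{lem:2.7} runs with $\mathcal{S}=$ artinian modules and no detour through $n=0$ is needed. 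The paper also cites \cite[Theorem~4.5]{CN2} here.

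\textbf{Cases (2) and (3).} You write ``we moreover know $\mathrm{H}_{s}^{\mathfrak{a}}(M)$ itself satisfies the finite condition (case~2) or is CFA (case~3)''. That is a misreading: the hypothesis only covers $M$ and $\mathrm{H}_{i}^{\mathfrak{a}}(M)$ for $i<s$, not $i=s$. Without that, your $FM_{\leq n_{0}}$ argument yields only finiteness of $(\mathrm{Coass}_{R}\mathrm{H}_{s}^{\mathfrak{a}}(M))_{>n_{0}}$, which is not enough for Theorem~\ref{thm:3.6}. The paper does not attempt to manufacture the missing finiteness internally; it simply invokes the external results \cite[Theorem~3.1]{N3} (for case~(2)) and \cite[Theorem~1]{N2} (for case~(3)), each of which takes precisely the hypotheses on $M$ and on $\mathrm{H}_{i}^{\mathfrak{a}}(M)$ for $i<s$ and returns finiteness of $\mathrm{Coass}_{R}\mathrm{H}_{s}^{\mathfrak{a}}(M)$.
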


\begin{proof}
Note that $\mathrm{g}^{\mathfrak{a}}_{-1}(M)=r_{\mathfrak{a}}(M)$. By Theorem \ref{thm:2.10}(4), \cite[Theprorem 4.5]{CN2}, \cite[Theorem 3.1]{N3} and \cite[Theorem 1]{N2}, the set $\mathrm{Coass}_{R}\mathrm{H}_{r_{\mathfrak{a}}(M)}^{\mathfrak{a}}(M)$ is finite. Hence the assertion follows by Theorem \ref{thm:3.6}.
\end{proof}

Part $\mathrm{(1)}$ of above corollary indicates that semi-discrete linearly compact $\mathfrak{a}$-coartinian modules must satisfiy the local-global principle for the artinianness dimension and no longer need the condition in Corollary \ref{cor:4.3}.

\bigskip

\end{document}